\ifpdf \usepackage{epstopdf}
 \title[Light leaves and Lusztig's conjecture]
 {Light leaves and Lusztig's conjecture}
\author{\textsc{Nicolas Libedinsky}}
\address{Universidad de Chile,\\
Facultad de Ciencias,\\
Casilla 653, Santiago, Chile}
\email{nlibedinsky@gmail.com}
  \newcommand{\nc}{\newcommand}
  \newcommand{\renc}{\renewcommand} 
\nc{\E}{\mathbb{E}}
\nc{\Q}{\mathbb{Q}}
\nc{\triright}{\stackrel{[1]}{\to}}
\def\to{\rightarrow}
\newcommand{\CZ}{{\mathcal Z}}
\newcommand{\hCZ}{{\CZ}^{\circ}}
\nc{\Br}{\mathcal{B}}
\nc{\id}{id}
\nc{\HotRR}{{}_R\mathcal{K}_R}
\nc{\HotR}{\mathcal{K}_R}
\nc{\excise}[1]{}
\nc{\defect}{\text{df}}
\nc{\h}[1]{\underline{H}_{#1}}
\nc{\Z}{\mathbb{Z}}
\nc{\R}{\mathbb{R}}
\nc{\C}{\mathbb{C}}
\renc{\P}{\mathbb{P}}
\renc{\O}{\mathcal{O}}
\nc{\N}{\mathbb{N}}
\nc{\F}{\mathcal{F}}
\nc{\G}{\mathcal{G}}
\nc{\nilp}{\mathcal{N}}
\nc{\Ga}{\mathbb{G}_a} 
\nc{\Gm}{\mathbb{G}_m} 
\nc{\Loc}{\mathcal{L}}
\nc{\A}{\mathbb{A}} 
\nc{\IC}{\mathbf{IC}}
\nc{\D}{\mathbb{D}}
\DeclareMathOperator{\supp}{supp}
\newtheorem{term}{Terminology}
  \newtheorem{defi}{Definition}
  \newtheorem{thm}{Theorem}[section]
  \newtheorem{lem}[thm]{Lemma}
  \newtheorem{deth}[thm]{Definition/Theorem}
  \newtheorem{prop}[thm]{Proposition}
  \newtheorem{cor}[thm]{Corollary}
  \newtheorem{conj}[thm]{Conjecture}
  \newtheorem{nota}[thm]{Notation}
  \theoremstyle{remark}
  \newtheorem{remark}{Remark}
\newcommand{\BMod}[2]{{#1}\textrm{-Mod-}#2}
\newcommand{\Gr}{\textrm{Gr}}
\begin{document}
\begin{abstract}
We define a map $F$ with domain a certain subset of the set of 
\emph{light leaves}  (combinatorial objects introduced by the author in a previous paper) and range the set of prime numbers. We prove the following property of $F$:  if the image $p=F(l)$ of some light leaf $l$ under $F$ is bigger than the Coxeter number of the corresponding Weyl group, then there is a counterexample to Lusztig's conjecture in characteristic $p$. 
We also introduce the  ``double leaves basis" which is an amelioration of the light leaves basis  that has already found interesting applications. In particular it forms a cellular basis of Soergel bimodules that allows us to  produce an algorithm to find ``the bad primes" for Lusztig's conjecture. 

\end{abstract}
\maketitle

\section{Introduction}

\subsection{Lusztig's Conjecture} Let $n\geq 2$ be an integer and $p$ a prime number. Consider the following question.

 \textbf{Q}: \emph{What are the characters of the irreducible rational representations of} $GL_n(\overline{{\mathbb{F}}}_p)$ \emph{over} $\overline{\mathbb{F}}_p?$

 This natural question gained interest in the year 1963 when Steinberg  proved that  all the irreducible  representations of  the  finite groups  $GL_n({\mathbb{F}}_q)$ (with $q$ a power of $p$) could be obtained from the irreducible representations of $GL_n(\overline{{\mathbb{F}}}_p)$ by restriction (see \cite{St}).

 The groups  $GL_n({\mathbb{F}}_q)$ are examples of finite groups of Lie type, i.e. groups $G({\mathbb{F}}_q)$ of rational points of a reductive linear algebraic group $G$  defined over  ${\mathbb{F}}_q$. In the mid-seventies it  became likely that (minor modifications of) the finite groups of Lie type, together with the cyclic and alternating groups, would give all the infinite families of finite simple groups; this gave a new impulse to the study of \textbf{Q}.

A big breakthrough came in 1979 when Lusztig \cite{Lu1} gave a conjectural answer for \textbf{Q} when $p>n,$ and more  generally, he conjectured  a formula for the characters of the irreducible rational representations of any reductive algebraic group $G$ over $\overline{\mathbb{F}}_p$ when $p$ is bigger than $h$, the Coxeter number of the Weyl group associated to $G$. This is known as \emph{Lusztig's Conjecture}  \footnote{Actually   Lusztig made his conjecture only for those irreducible modules with high weights in the "Jantzen region", a finite collection of weights containing all "restricted" weights for $p\geq 2h-3,$ and thus giving formulas for any high weight by previous work of Steinberg. This conjecture was later generalized by Kato \cite{Ka} to all restricted weights  for $p\geq h$. This generalization is what we call "Lusztig's conjecture". In the literature it is either called "Lusztig's conjecture", "Kato's conjecture" or "Kato's extension of the Lusztig conjecture". } for algebraic groups. Geordie Williamson \cite{Wi3} proved  in September 2013 that this conjecture is false using the light leaves approach. 

\subsection{Main approaches toward Lusztig's Conjecture}
We include this section  for historical reasons, it is not needed in the rest of the text.

There have been several approaches to Lusztig's conjecture throughout the years. Let us recall some of them.

One approach was due to E. Cline, B. Parshall and  L. Scott. They wrote many papers, starting in the late eighties, related to Lusztig's conjecture  and  to the algebraic understanding of the geometry involved in Kazhdan-Lusztig theory (see \cite{Sc} for an overview). Let $G$ be a reductive algebraic group and $A$  the quasi-hereditary algebra associated to the category of $G$-modules whose composition factors have regular high weights in a poset $\Gamma$ taken to be the Jantzen region. They proved \cite[remark 2.3.5]{CPS} that if $A$ is Koszul (in a suitably strong sense) then Lusztig's conjecture is true.  Parshall and Scott \cite{PS} have recently made a contribution to this approach by showing that it is sometimes possible to transfer the good Lie theoretic properties to $\mathrm{gr}A$, thereby artificially creating much the same situation that would be obtained from an actual Koszul grading on $A$ itself.


In 1990 Lusztig (\cite{Lu2}, \cite{Lu3})  himself outlined a program for proving his conjecture when $p$ is  "large enough". This program used deep algebraic geometry. It was fulfilled in several steps by Kashiwara-Tanisaki (\cite{KT1}, \cite{KT2}), Kazhdan-Lusztig (\cite{KL1}, \cite{KL2}, \cite{KL3}, \cite{KL4}) and Andersen-Jantzen-Soergel (\cite{AJS}). They essentially reduced the problem to the one of calculating the local intersection cohomology over the complex numbers of some finite-dimensional Schubert varieties in an affine flag variety.  However this approach seems not to lead to any reasonable bounds on $p$. Thus, apart from the cases $A_1, A_2, A_3, B_2, G_2$, the
characters of the irreducible modules of $G$ remained unknown for a given characteristic.

A more recent trend is R. Bezrukavnikov's geometric approach, in collaboration with I. Mirkovic, D. Rumynin and S. Arkhipov. This program is implicit in Bezrukavnikov's ICM address \cite{Be}. In  2010  Bezrukavnikov and Mircovi\'c \cite{BM} proved a group of conjectures of Lusztig \cite{Lu4} (that at some point were known as ``Lusztig's Hope") relating the canonical basis in the homology of a Springer fiber
to modular representations of semi-simple Lie algebras. Their proof is based on the works \cite{BMR1}, \cite{BMR2} and \cite{AB}. Lusztig's conjecture (on representations of algebraic groups in positive characteristic) is essentially equivalent to the particular case of restricted representations, when the relevant Springer fiber is the full flag variety. Thus with this method they gave a new proof of Lusztig's conjecture for $p\gg h.$

In 2010 P. Fiebig \cite{Fi5} improved on these results by constructing, for any given root system $R$, an explicit number $N(R)$  such that
Lusztig's conjecture is true for $p>N(R).$ The number $N(R)$ is enormously big compared with the Coxeter number. To find this number $N(R)$ he introduces a category of sheaves on moment graphs and translates the problem into that language.

\subsection{Soergel's approach}\label{Sa}
  For any Coxeter group $W,$ Soergel \cite{So2}  constructed a polynomial ring $R$ with coefficients in the real numbers $\mathbb{R}.$ Then he constructed a concrete category $\mathcal{B}=\mathcal{B}(W,\mathbb{R})$ of graded bimodules over $R,$  called the category of \emph{Soergel bimodules}.

 Soergel \cite{So2} proved  that $\mathcal{B}$ categorifies the Hecke algebra $\mathcal{H}$ of $W$ in the sense that he constructed a ring isomorphism $$\varepsilon: \mathcal{H}\rightarrow \langle \mathcal{B}\rangle,$$ where $\langle \mathcal{B}\rangle$ denotes the split Grothendieck group of $ \mathcal{B}$. He stated a beautiful conjecture (that we call \emph{Soergel's $0$-conjecture}, even though it has been a theorem since 2013 \cite{EW1}) saying that, if $\{C'_x\}_{x\in W}$ is the Kazhdan-Lusztig basis of the Hecke algebra, then $\varepsilon(C'_x)=\langle B_x \rangle$, where $B_x$ is an indecomposable object of $\mathcal{B}$.

We call this ``Soergel's $0$-conjecture" because it deals with characteristic $0$. There is an analogue of this conjecture in positive characteristic, but the situation there is more complicated.

If $W$ is a Weyl group then everything works as well over a field of characteristic $p$ and Soergel's conjecture in this context will be called \emph{Soergel's $p$-conjecture}.

If $\widehat{W}$ is an affine Weyl group, then there is  a finite subset $\widehat{W}^{\circ}$ of $\widehat{W}$ (for details see Section \ref{P0}) such that all the theory works  in characteristic $p$ if one replaces $W$ by $\widehat{W}^{\circ}$. This version of Soergel's conjecture will be called  \emph{Soergel's affine $p$-conjecture} and it will mean that $\varepsilon(C'_x)=\langle B_x \rangle$ for all elements $x\in \widehat{W}^{\mathrm{res},-}$, where $\widehat{W}^{\mathrm{res},-}$ is some subset of $W^{\circ}.$ The naive generalization of Soergel's conjecture to this context, i.e. to conjecture that this equation is true for every element $x\in \widehat{W}$ is probably not true. We would like to note that  Soergel never conjectured any statements for positive characteristic.

Soergel \cite{So1} himself proved what we have called Soergel's $0$-conjecture   for Weyl groups.  M. H\"{a}rterich \cite{Ha} proved it for affine Weyl groups. Soergel's $0$-conjecture for universal Coxeter groups was proved independently by P. Fiebig  \cite{Fi2} and the author  \cite{Li6}. 

Recently B. Elias and G. Williamson \cite{EW1} gave a striking algebraic proof of  Soergel's $0$-conjecture for any Coxeter system . By previous results of Soergel, their work gave a proof of the longstanding  Kazhdan-Lusztig positivity conjecture for every Coxeter system and an algebraic proof of Kazhdan-Lusztig conjecture. With their method they not only solved these conjectures, but they invented a new mathematical subject: ``algebraic Hodge theory" by looking at Soergel bimodules as if they were the intersection cohomology of some (non-existent) spaces. When $W$ is a Weyl or affine Weyl group, these spaces do exist.

Soergel proved  in  \cite{So3} that  Soergel's $p$-conjecture is equivalent to a part of Lusztig's conjecture (for weights around the
Steinberg weight). We prove in  Section \ref{eq1} something that was in principle known to experts (and implicit in the work of Fiebig) but not written down anywhere to the author's knowledge. We prove that  Soergel's affine $p$-conjecture  is equivalent to Fiebig's conjecture about sheaves on moment graphs. Fiebig proved that his conjecture implies the full Lusztig conjecture. We believe that the converse is also true: Lusztig's conjecture implies Fiebig's conjecture, but we are not yet able to prove it (this would be relevant to find the complete set of counterexamples to Lusztig's conjecture).

 \subsection{Double leaves basis}
 
 Now we explain our approach to Soergel's conjecture. For each reduced expression $\underline{s}$ of an element $x\in W,$ Soergel constructed an explicit  Soergel bimodule $M_{\underline{s}}$  called the \emph{Bott-Samelson} bimodule (see Section \ref{So} for details). For $x \in {W},$ we fix a reduced expression of $x$ and we call $M_x$ the corresponding Bott-Samelson.  In \cite{Li1} we constructed combinatorially, for $x\in W$ a perfect binary tree  (i.e. a tree where every node that is not a leaf has two children, and where all leaves are at the same depth) that we called $\mathbb{T}_x$. The nodes of $\mathbb{T}_x$ are colored by Bott-Samelson bimodules and the edges are colored by morphisms from the corresponding parent to child. The root node of $\mathbb{T}_x$ is colored by $M_x$  and if a leaf is colored by $M_y$ then the composition of all the  morphisms in the path from the root to that leaf gives an element in $\mathrm{Hom}(M_x,M_y)$ that we call a \emph{leaf from $x$ to $y$}. We identify each leaf in $\mathbb{T}_x$ with the corresponding morphism between Bott-Samelson bimodules.
 
 For each leaf $l$  from $x$ to $y$ there is an adjoint leaf $l^a \in \mathrm{Hom}(M_y,M_x)$ (in fact, every morphism between Bott-Samelson bimodules has an adjoint morphism inverting the sense of the arrow).  If $x,y\in W$  and  we have leaves $l_u$ from $x$ to $u$ and $l_v$ from $y$ to $v,$ we define the product $l_v\cdot l_u$ as $l_v^a\circ l_u \in \mathrm{Hom}(M_x,M_y)$ if $u=v,$ and as the empty set if $u\neq v.$

\vspace{0.2cm}

\textbf{Theorem 1} $\ $\emph{ The set $\{l'\cdot l \, |\, l\, \mathrm{ leaf\, of\,  }\,\mathbb{T}_x \, \mathrm{ and\, }l'\, \mathrm{ leaf\, of\,  }\,\mathbb{T}_y\}$ is a basis of $\mathrm{Hom}(M_x,M_y)$ as a left $R$-module.}

\vspace{0.2cm}

We give the proof in section \ref{ac}.
 We call this  the \emph{double leaves basis}  of $\mathrm{Hom}(M_x,M_y)$.  We can see this basis as ``gluing" the tree $\mathbb{T}_x$ with the tree $\mathbb{T}_y$ inverted. 

We can mention three  applications of Theorem 1:
\begin{itemize}
\item The presentation of Soergel's bimodule category by generators and relations in \cite{EW3}. The central theorem  \cite[Theorem 1.1]{EW3} says that the double leaves basis is a basis in the diagrammatic setting.
\item The recent counterexamples to Lusztig's conjecture in \cite{Wi3}. Double leaves are the combinatorial tool to actually calculate these counterexamples (see for example, pages $3$ and $6$ of \cite{Wi3}).
\item The proof of Soergel's $0$-conjecture and Kazhdan-Lusztig conjectures in  \cite{EW1}. In a personal communication G. Williamson told the author ``the original proof of Soergel's conjecture was in the light leaves language, which allowed us to do many calculations, and appears more natural at several steps in the proof. It will appear explicitly in  \cite{EW4}. We could not publish it right away because the results relied on the long papers \cite{EW2} and \cite{EW3} which are still in preparation\footnote{The paper \cite{EW3} is already in the arXiv.}. Hence we decided to make the proof purely algebraic so that it only relied on citable results".
\end{itemize}

 \subsection{The map $F,$ and counterexamples to Lusztig's conjecture}\label{F}

Let $W$ denote a Weyl group. We will work with Soergel bimodules over the field of rational numbers $\mathbb{Q}$.

Let us fix some $x \in W.$ We define $\mathcal{A}_x$ as the set  of leaves $l$  in $\mathbb{T}_x$ satisfying the following.
\begin{enumerate}
\item $l$ is  degree zero leaf. 
\item If $l$ is a leaf from $x$ to $y$ then $l$ is the only degree zero leaf from $x$ to $y$.
\end{enumerate}

Let us now define the map $F: \mathcal{A}_x\rightarrow \mathbb{Z}.$ If $l\in \mathcal{A}_x$ is a leaf from $x$ to $y,$ the morphism $l\circ l^a$ is a degree zero endomorphism of $M_y$ that restricted to the (one dimensional) degree zero part of $M_y$ is just multiplication by a rational   number $\lambda(l).$ It is easy to see by construction of the tree $\mathbb{T}_x$ (looking at the definition of each of its composing morphisms) that $\lambda(l)\in \mathbb{Z},$ and so we define $F(l)$ as the biggest prime divisor of $\lambda(l).$ In Section \ref{nonex} we prove

\vspace{0.2cm}

\textbf{Theorem 2} \emph{ $\ $If there exists $l  \in \mathcal{A}_x$ such that $F(l)>h$, with $h$ the Coxeter number of $W,$ then Lusztig's conjecture for an algebraic group $G$ with Weyl group $W$ is false in characteristic $p=F(l)$.}

\vspace{0.2cm}

We explain in the next section the idea behind this theorem. This result was strengthened by the fact that, some months after this paper was posted on the arXiv Geordie Williamson \cite{Wi3} found, for each $SL_n,$  leaves $l_n$ with $F(l_n)>n$ when $n>18$. Moreover, $F(l_n)$ seem to grow exponentially in $n$. Williamson proved Theorem 2 independently (see \cite{Wi3}).

 \subsection{How to algorithmically find the bad primes}

After having found the double leaves basis, the next crucial point in this paper is to realize that there is a ``favorite projector" $p_x$ in $\mathrm{End}(M_x)$ projecting to $B_x$ (Section \ref{fav}). It is a simple observation but it took many years to the author to actually do this observation.

Let $W$ be a Weyl group  or an affine Weyl group. If $W$ is a Weyl group we define $W^\circ=W$ and if $W$ is an affine Weyl group, $W^{\circ}$ is the finite subset considered in Section \ref{Sa}.

 For every $x\in {W}^{\circ},$ there is an indecomposable Soergel bimodule $B_x\in \mathcal{B}({W},\mathbb{Q})$ appearing only once in the direct sum decomposition of $M_x$. We find an algorithm to express the projector $p_x$ corresponding to $B_x$  as a linear combination, with coefficients in $\mathbb{Q},$ of the elements in the   double leaves of $\mathrm{End}(M_x)$. Let $d(x)$ be the set of primes dividing the denominators of the above-mentioned coefficients and $$D=\bigcup_{x\in W^\circ}d(x).$$  
In the category  $ \mathcal{B}(W^\circ,{\mathbb{F}}_p)$ 
(defined as the full subcategory of $\mathcal{B}(W,\mathbb{F}_p)$ with objects direct sums of shifts of objects of the type $B_x$ with $x\in W^\circ$) the double leaves are also well defined (they are defined over $\mathbb{Z}$), so if $p\notin D$ you can reduce mod $p$ the coefficients in the expansion of $p_x$ in terms of the double leaves
so as to produce projectors $p'_x\in  \mathcal{B}(W^\circ,{\mathbb{F}}_p)$ for every $x\in W^\circ$. We prove in Section \ref{0p} that the corresponding bimodules $\mathrm{Im}(p'_x)\in \mathcal{B}(W^\circ,{\mathbb{F}}_p)$ have the same decategorification in $\mathcal{H}$  as $\mathrm{Im}(p_x)\in \mathcal{B}({W}^\circ,\mathbb{Q}).$ As Soergel's $0$-conjecture is true, with this method one can prove that Soergel's $p$-conjecture is true in $ \mathcal{B}(W^\circ,{\mathbb{F}}_p)$ if and only if $p\notin D .$

Let us roughly explain the algorithm mentioned above.   Consider $x,y\in W^\circ$ and  $l, l'$ two degree zero leaves from $x$ to $y$.  Let's say that $M_x=B_{s_1}\cdots B_{s_n}$ and suppose you have constructed $p_{xs_n}$  in the endomorphism ring of $M_{xs_n}=B_{s_1}\cdots B_{s_{n-1}}$. The restriction of the degree zero endomorphism $$l\circ (p_{xs_n}\otimes \mathrm{id})\circ l'^a$$ to the one dimensional degree zero part of $M_y$ is a scalar multiple of the identity that we denote $\lambda(l,l')\in \mathbb{Z}$. For $x, y\in W^\circ$ there exist a subset $L_{x,y}= \{l_1,\ldots,l_n\}$ (essentially the ``linearly independent" elements) of the set  of degree zero leaves from $x$ to $y$ such that if $d(x,y)$ is the determinant of the matrix with $(i,j)$-entry $\lambda(l_i,l_j),$ then we have the following theorem.

\vspace{0.2cm}

\textbf{Theorem 3}  \emph{$\ $  Let $R$ be a root system with affine Weyl group $W$ (resp. Weyl group $W$).   Lusztig's conjecture (resp. Lusztig's conjecture around the Steinberg weight) for algebraic groups with root system $R$ is true if (resp. if and only if)  the characteristic does not divide any element of the set 
$$\{d(x,y)\, |\, x\in W^\circ, \, y\leq x\}$$ where $\leq$ stands for the Bruhat order. 
}
\vspace{0.2cm}
 
Moreover, with all the values $\lambda(l_i,l_j)$ you can construct the projector $p_x,$ so this gives an inductive construction of these projectors.  For details look at Section \ref{ex}.

We remark that (as we said before) we believe that Fiebig's conjecture is equivalent to Lusztig's conjecture and this would imply that in  Theorem 3, when $W$ is an affine Weyl group we could replace ``if" by ``if and only if".  The missing part (the only if) could be relevant only if  we could  find counterexamples to Lusztig's conjecture  using weights that are not around the Steinberg weight.

There existed algorithms to find the bad primes, see \cite{AJS} and \cite{Fi5}.  The algorithm in  \cite{Fi5} is more efficient than the one in \cite{AJS}, in particular it allows to obtain explicit bounds for the bad primes. The algorithm here has the advantage over the one in \cite{Fi5} that we only consider degree zero morphisms, whereas Fiebig, in \cite{Fi5}, has to conisder all degree morphisms. This diminishes considerably calculations. 

In the paper \cite{Wi3} Williamson constructs another algorithm  (quite similar to ours) to find the bad primes. In his algorithm (obtained using intersection forms) he considers $D_{x,y},$ all the degree zero leaves from $x$ to $y$ for  $x,y\in W^{\circ}$ (not just the ones in $L_{x,y}\subseteq D_{x,y}$) and then the bad primes  are the prime numbers for which  the rank of the matrix
$(l'\circ l^a)_{l,l'\in D_{x,y}}$, for all $x,y\in W^{\circ}$ is different that the one in characteristic zero. 

Williamson's algorithm is better in one sense and worst in another sense. It is better in that you don't need to know, for a word of some given length, the projectors to the indecomposables of the words of lesser lengths (in the language above, $p_{xs_n}$). This implies that, if you are only interested in knowing the full set of bad primes for Soergel's conjecture, then his algorithm is better. Our algorithm has the advantage that, although it is more difficult to calculate, it gives more precise information. Using it, one knows the coefficient associated to each double leaf when you expand the favorite projector in terms of double leaves. In general, there might be a relation between the combinatorics of a given double leaf and the aforementioned coefficient that would not be visible in Williamson's algorithm.  See \cite{EL} for an example, the Universal Coxeter system, where the projectors are easier to calculate than the full set of bad primes. 

I hope that this will be the same situation in the affine Weyl group case. So, this new information (the coefficient associated to each double leaf)  might  be useful to find closed formulas for the denominators. One problem to do this is that we should have to be able to find a ``canonical" double leaf (the construction of the tree  depends on some choices and there is still no canonical way to make them). This problem doesn't appear in the Universal Coxeter system case, because in that case the light leaves are indeed canonical. There is some recent work in this direction done by the author and G. Williamson, that solves this difficulty for almost all Weyl groups.

\subsection{Standard Leaves basis}
The recent proof of Kazhdan-Lusztig's positivity conjecture is very important for algebraic combinatorics. Moreover, many results assumed that KL polynomials have positive coefficients. But it remains an important problem: how to prove this in a combinatorial way. In other words, how to interpret these coefficients as the cardinality of some combinatorially defined set.  In section \ref{SLB} we prove a proposition that we believe that might help in this direction. 

Soergel found in \cite{So4} that the morphism $\varepsilon: \mathcal{H}\rightarrow \langle \mathcal{B}\rangle$ 
has the following inverse
\begin{equation}\label{S}
\eta(\langle  M\rangle)=\sum_{y}\underline{\overline{\mathrm{rk}}}\mathrm{Hom}(M,R_y)T_y\ \  \mathrm{for}  \  \mathrm{all}\ y\in W,
\end{equation}
where $\underline{\overline{\mathrm{rk}}}$ is the graded rank of a bimodule and $R_x$ is the standard $(R,R)$-bimodule: as a left module equal to $R$ and with the right action   twisted by $x\in W$ (for details see Section \ref{equiv}).
This formula says that  the KL polynomial $p_{x,y}$ is (modulo normalization) $\underline{\overline{\mathrm{rk}}}\mathrm{Hom}(B_x,R_y)$, and this explains how  Proposition 4 enters this story. 

For every $y\in W$ there is a canonical morphism $\beta_y: M\rightarrow R_y$. If $l$ is a  leaf from $x$ to $y$, we define $\l^{\beta}$ the composition of morphisms $\beta_y\circ l.$ The following is Proposition \ref{Lb}

\vspace{0.2cm}

\textbf{Proposition 4}  $\ $\emph{The set $\l^{\beta}$ with $l$ a leaf from $x$ to $y$ is a basis of the set $\mathrm{Hom}(M, R_y)$ as a right $R$-module. We call it the \emph{Standard leaves basis}.}

\vspace{0.2cm}

\subsection{Structure of the paper}

The structure of the paper is as follows. In Section \ref{Prel} we give the definitions and first properties of Hecke algebras and Soergel bimodules. In Section \ref{Light leaves basis} we construct the double leaves basis and in Section  \ref{ac} we prove that it is indeed a basis for arbitrary Coxeter systems. In Section \ref{ex} we find the above-mentioned algorithm to express $p_x$ as a linear combination of the elements of the double leaves basis and in Section \ref{nonex} we define $L_{x,y}$ and prove Theorem 3. In section \ref{0p} we explain the relation between idempotents in Soergel's theory over $\mathbb{Q}_p$ and idempotents in Soergel's theory over $\mathbb{F}_p$.  In section \ref{SLB} we prove Proposition 4 .  Finally, in Section \ref{eq1} we prove that Soergel's affine $p$-conjecture is equivalent to Fiebig's conjecture in moment graphs theory.

\subsection{Acknowledgements} We would like to thank Geordie Williamson, Fran\c{c}ois Digne and Wolfgang Soergel for helpful comments. 

The results of this paper have been exposed from 2011 to 2013 (Freiburg, Erlangen, Buenos Aires, C\'ordoba, Valpara\'iso, Olmu\'e, Puc\'on, Tucum\'an).

This work was partially supported by the Von Humboldt foundation during the author's postdoctoral stay in Freiburg and by Fondecyt project number 11121118.

\section{Preliminaries}\label{Prel}

In  this section we consider  $(W,\mathcal{S})$  an arbitrary Coxeter system unless explicitly stated. We define the corresponding Hecke algebra and Soergel bimodules.  

\subsection{Hecke algebras}\label{HA}  Let $\mathcal{A}=\mathbb{Z} [v,v^{-1}]$ be the ring of Laurent polynomials with integer coefficients. The \textit{Hecke algebra} $\mathcal{H} = \mathcal{H}(W, \mathcal{S})$
is the $\mathcal{A}$-algebra with generators  $\{
 T_{s}
\}_{s\in \mathcal{S}}$, and relations  $$T^{2}_{s}=
v^{-2}+(v^{-2}-1)T_{s} \mathrm{ \ \ for\ all\ \ } s\in \mathcal{S}\ \   \mathrm{and} $$
$$\underbrace{T_{s}T_{r}T_{s}...}_{m(s,r)\, \mathrm{terms}
}=\underbrace{T_{r}T_{s}T_{r}...}_{m(s,r)\, \mathrm{terms} } \ \mathrm{if}\
s,r \in \mathcal{S}\ \mathrm{ and\ }sr \mathrm{\ is\ of\ order\ } m(s,r).$$

If
$x=s_{1}s_{2}\cdots s_{n}$ is a reduced expression of $x$, we define $T_x=T_{s_1}T_{s_2}\cdots T_{s_n}$ ($T_x$ does not depend on the choice of the reduced expression). The set $\{T_x\}_{x\in W}$ is a basis of the $\mathcal{A}$-module $\mathcal{H}.$ We put $q=v^{-2}$ and $\widetilde{T}_x = v^{l(x)} T_x$.

 There exists a unique ring involution $d: \mathcal{H} \rightarrow \mathcal{H}$ with $d(v) = v^{-1}$ and $ d(T_x ) = (T_{x^{-1}})^{-1}$. Kazhdan and Lusztig  \cite{KL1} proved that for $x \in W$ there exist a unique $C'_x \in \mathcal{H}$ with $d(C'_x) = C'_x$
 and $$C'_x \in \widetilde{T}_x+ \sum_y v\mathbb{Z}[v]\widetilde{T}_y.$$
 The set $\{C'_x\}_{x\in W}$ is the so-called \emph{Kazhdan-Lusztig basis} of the Hecke algebra. It is a basis of $\mathcal{H}$ as an $\mathcal{A}$-module.

\subsection{Reflection representations}\label{Rep}

Recall that $(W,\mathcal{S})$ is an arbitrary Coxeter system. A \emph{reflection faithful representation} of $W$ over a field $k$ as defined by Soergel is a finite dimensional $k$-representation that is faithful, and such that, for $w\in  W$ the fixed point set $V^w$ has codimension one in $V$ if and only if $w$ is a reflection, i.e. a conjugate of a simple relfection (we call this set $\mathcal{T}\subset W$). For $k=\mathbb{R}$ Soergel constructed \cite[Section 2]{So4} such a representation for any Coxeter system. 

For the purposes of this paper we need to define a class of representations in which all of Soergel's theory works but that are local in nature.
For $x\in W$ consider the reversed graph$$\Gr(x)=\{(xv,v)\,\vert\, v\in V\}\subseteq V\times V.$$

\begin{defi}
Let $W'$ be a subset of $W$ closed under $\leq$ (where $\leq$ is the Bruhat order). This means that  for every $w'\in W'$, we have $$\{ w\in W\,\vert\, w\leq w'\}\subseteq W'.$$
An $n-$dimensional representation $V$ of $W$ is called $W'-$reflection  if the two following conditions hold.
\begin{itemize}
\item For $x,y\in W',$ we have $\mathrm{dim}(\Gr(x)\cap \Gr(y))=n-1$ if and only if  $x^{-1}y\in \mathcal{T}$
\item There is no $x,y,z\in W'$ different elements such that $$\Gr(x)\cap \Gr(y)=\Gr(z)\cap \Gr(y),$$ both sets having dimension $n-1$.
\end{itemize}
\end{defi}

We remark that a $W-$reflection representation $V$ of $W$ is reflection faithful, and that all of Soergel's theory works for a $W$-reflection representation and also for a $W'$-reflection representation in the sense of Proposition \ref{grande} $(5)$.

We now make a brief terminology detour. 

\begin{term}
The term ``geometric representation" defined in \cite[Ch. v, 4.3]{Bo} and used generally in the literature seems flawed to us. This representation is not more geometric than the contragradient one (defined in \cite[Ch. v, 4.4]{Bo}). We propose (with  W. Soergel) to call it \emph{rootic representation} as you can see the lines generated by the roots as
pairwise disjoint $(-1)$-eigenspaces of reflections. We also propose to call \emph{alcovic representation} the contravariant representation, since the alcoves are ``visible".  The group acts faithfully in the set of alcoves, and even simply transitively on those in the Tits cone. 
\end{term}

We proved in \cite{Li2} that all of Soergel's theory over $k=\mathbb{R}$ works as well if you choose the rootic representation (that it is not a reflection faithful representation). 

Let $k$ be a field of characteristic different from $2$ and let $G\supset B\supset T$ be a semisimple split simply connected algebraic group over $k$ with a Borel subgroup and a maximal torus. Let $(W,\mathcal{S})$ be the finite Weyl group of $G\supset B.$ The representation of $W$ on the Lie algebra $\mathrm{Lie}(T)$ is reflection faithful. 

Let $\widehat{W}$ be the affine Weyl group associated to $G\supset B.$ Let $V$ be a realization of the affine Cartan matrix. Fiebig \cite[Section 2.4]{Fi4} considers the following finite subsets of $\widehat{W}$. 
\begin{itemize}
\item $\widehat{W}^{\mathrm{res},+}:=\{w\in \widehat{W}\, \vert\, 0\leq \langle w\cdot 0, \alpha^{\vee}\rangle<p \mathrm{\ for\ all\ simple\ roots\ }\alpha \}$
\item $\widehat{W}^{\mathrm{res},-}:=w_0\widehat{W}^{\mathrm{res},+}$
\item $\widehat{W}^{0}:=\{w\in \widehat{W}\, \vert\,  w\leq \hat{w}_0\}$
\end{itemize}
Where the action $\cdot$ is the $\rho$-shifted action of $\widehat{W}$ on $V$, $w_0$ is the longest element in $ W\subset \widehat{W}$ and finally $\hat{w}_0$ is the longest element in $\widehat{W}^{\mathrm{res},-}$. The set $\widehat{W}^{\mathrm{res},-}$ is called the \emph{antifundamental box.}

We remark that this representation of $\widehat{W}$ is not reflection faithful but $\widehat{W}^\circ-$reflection as defined before .

So, summing up, if we start with the data $G\supset B\supset T$, we have the associated Weyl group $W$ with a representation in $V=\mathrm{Lie}(T).$ If $\widehat{W}$ is an affine Weyl group we take $V$ a realization of the affine Cartan matrix and if $W$ is a general Coxeter group that is not a Weyl nor an affine Weyl group and $k=\mathbb{R}$ we take $V$ as the rootic representation.

\subsection{Soergel bimodules}\label{So}

For any $\mathbb{Z}$-graded object $M=\bigoplus_i M_i,$ and every  $n\in \mathbb{Z}$, we denote by $M(n)$ the shifted object defined by the formula $$(M(n))_i=M_{i+n}.$$

Let $R=R(V)$\label{d1}  be the algebra of regular functions on $V$ with  the following grading: $R=\bigoplus_{i\in
\mathbb{Z}}R_i$ with  $R_2 = V^*$ and $R_i=0$ if $i$ is odd. The 
action of $W$ on $V$ induces an action on $R$. For $s\in\mathcal{S}$ consider the graded $(R,R)-$bimodule $$B_s=R\otimes_{R^s} R(1),$$ where $R^s$ is the subspace of $R$ fixed by $s$.

 The category of  \textit{Soergel bimodules} $\mathcal{B}=\mathcal{B}(W,k)$ is the category  of $\mathbb{Z}-$graded $(R,R)-$bimodules with objects the finite direct sums of direct summands of objects of the type $${B}_{s_1}\otimes_{{R}}{B}_{s_2}\otimes_{{R}}\cdots\otimes_{{R}} {B}_{s_n}(d)$$ for $(s_1,\ldots, s_n)\in \mathcal{S}^n,$ and $d\in\mathbb{Z}.$

Given $M,N\in \mathcal{B}$ we denote  their tensor product simply by juxtaposition: $M N := M \otimes_R N$.

If $ \underline{s}=(s_1,\ldots, s_n)\in \mathcal{S}^n,$ we will denote by $B_{\underline{s}}$ the $({R},{R})-$bimodule $${B}_{s_1}{B}_{s_2}\cdots {B}_{s_n}\cong {R}\otimes_{{R}^{s_1}}{R}\otimes_{{R}^{s_2}}\cdots \otimes_{{R}^{s_n}}{R}(n).$$ We use the convention $B_{\mathrm{id}}=R.$ Bimodules of the type $B_{\underline{s}}$ will be called  \emph{Bott-Samelson bimodules.}

Given a Laurent polynomial with positive coefficients $P = \sum a_iv^i \in \mathbb{N}[v, v^{-1}]$ and a graded bimodule $M$ we set
$$P \cdot M := \bigoplus M(-i)^{\oplus a_i}.$$

For every essentially small additive category $\mathcal{A}$, we call  $\langle\mathcal{A}\rangle$ the \textit{split Grothendieck 
group}. It is the free abelian group generated by the objects of  $\mathcal{A}$ modulo the relations $M=M'+M''$ whenever we have $M\cong M'\oplus M''$. Given an object $A\in \mathcal{A},$ let  $\langle A \rangle$ denote its class in $ \langle \mathcal{A}
\rangle$.

In \cite{So2} Soergel proves that there exists a unique ring isomorphism
$\varepsilon: \mathcal{H}\rightarrow \langle \mathcal{B}\rangle $
such that $\varepsilon(v)=\langle R(1)\rangle$ and $\varepsilon(T_s+1)=\langle R\otimes_{R^s}R \rangle$ for all $ s\in \mathcal{S}. $

\subsection{Support}\label{ca}

For any finite subset $A$ of $W$ consider the union of the corresponding graphs $$\Gr(A)=\bigcup_{x\in A}\Gr(x)\subseteq V\times V.$$ If $A$ is finite, we view  $\Gr(A)$ as a subvariety of $V \times V$. If we identify $R\otimes_{k}R$ with the regular functions on $V\times V$ then $R_A$, the regular functions on $\Gr(A)$, are naturally $\mathbb{Z}$-graded $R$-bimodules. We will also write $R_{\{x\}}=R_{x}.$ One may check that given $x \in W$ the bimodule $R_x$ has the following simple description: $R_x \cong R$ as a left module, and the right action is twisted by $x$: $m \cdot r = mx(r)$ for $m \in R_x$ and $r \in R$.

For any $R$-bimodule $M \in \BMod{R}{R}$ we can view $M$ as an $R \otimes_{k} R$-module (because $R$ is commutative) and hence as a quasi coherent sheaf on $V \times V$. Given any finite subset $A \subseteq W$ we define \[  M_A := \{ m \in M \; | \; \supp m \subseteq \Gr(A) \} \] to be the subbimodule consisting of elements whose support is contained in $\Gr(A)$ (we remark that in \cite{So4} the bimodule $M_A$ is denoted $\Gamma_AM$). For $x\in W$ we denote $ M_{\{x\}}=M_x.$  

Given any Soergel bimodule $M$ we define $M^x$ the restriction of $M$ to $\Gr(x),$ and $M^{x\cap y}$ its restriction to $\Gr(x)\cap \Gr(y).$ For $x,y \in W$ we call $\rho_{x,y}: M^x\rightarrow M^{x\cap y}$ the restriction map.

In the following we will abuse notation and write $\le x$ for the set $\{ y \in W \; | \; y \le x \}.$

 \subsection{Indecomposable Soergel bimodules}\label{ind}
We start by recalling the most important features of the  indecomposable Soergel bimodules, central in Soergel's theory. Consider $W$ a Weyl group and $\mathcal{B}=\mathcal{B}(W, k)$ with $k=\mathbb{Q}$ or $k=\mathbb{F}_p$. 
The following can be found in \cite[Theorem 2]{So2} and  \cite[Satz 6.16]{So4} for infinite fields, or  in \cite{EW3} for any complete local ring (including the case $k=\mathbb{F}_p$).
  
\begin{prop}\label{grande} 
\begin{enumerate} 
\item For all $w\in W$ there is, up to isomorphism, a unique indecomposable bimodule $B_w\in \mathcal{B}$ with support in $\mathrm{Gr}(\le w)$ and such that $B_w^w\cong R_w(l(w)).$
\item The map $(w,i)\mapsto B_w(i)$ defines a bijection from the set
$ W\times \mathbb{Z}$ to the set of indecomposable objects of $\mathcal{B}$ up to isomorphism.
\item\label{3} If $\underline{s}=(s_1,\ldots, s_n)$ is a reduced expression of $w\in W$ then there are polynomials $p_y\in \mathbb{N}[v, v^{-1}]$ such that   $$B_{\underline{s}}\cong B_w\oplus\bigoplus_{y<w}p_y\cdot B_y.$$ 
\end{enumerate}
\end{prop}

Let us state the version of Proposition \ref{grande} for an affine Weyl group $\widehat{W}$. 

\begin{prop}\label{grandes} 
 \begin{enumerate} 
\item For all $w\in \widehat{W}^{\circ}$ there is, up to isomorphism, a unique indecomposable bimodule  $B_w\in \mathcal{B}(\widehat{W},k)$ with support in $\mathrm{Gr}(\le w)$ and such that $B_w^w\cong R_w(l(w)).$ Thus we can define $\mathcal{B}^{\circ}$ as the full subcategory of $\mathcal{B}(\widehat{W},k)$ whose objects are finite direct sums of elements in the set $$\{B_w(d)\,\vert\, w\in \widehat{W}^{\circ}, d \in \mathbb{Z}\}.$$
\item The map $(w,i)\mapsto B_w(i)$ defines a bijection from the set
$ \widehat{W}^{\circ}\times \mathbb{Z}$ to the set of indecomposable objects in $\mathcal{B}^{\circ},$ up to isomorphism.
\item\label{3} If $\underline{s}=(s_1,\ldots, s_n)$ is a reduced expression of $w\in \widehat{W}^{\circ}$ then there are polynomials $p_y\in \mathbb{N}[v, v^{-1}]$ such that   $$B_{\underline{s}}\cong B_w\oplus\bigoplus_{y<w}p_y\cdot B_y.$$ 
\end{enumerate}
\end{prop}

The following proposition is equivalent to Soergel's $0$-conjecture. 
\begin{prop}\label{ind}
Let $k=\mathbb{R}$ and let $W$ be any Coxeter group. For $x,y\in W,$ we have
\begin{equation*}
\underline{\mathrm{Hom}}(B_x,B_y)\cong
\begin{cases}
k& \text{if } x=y,\\
0& \text{otherwise,}
\end{cases}
\end{equation*}
where $\underline{\mathrm{Hom}}(B_x,B_y)$ denote the set of degree zero elements in $\mathrm{Hom}(B_x,B_y).$
\end{prop}

We can now state  \cite[Conjecture 1.13]{So2}, the central conjecture of Soergel.

\begin{conj}[Soergel's $0$-conjecture]\label{cs}
If $k=\mathbb{R}$ and $W$ is any Coxeter system, for every  $w\in W$ we have $\varepsilon(C'_w)=
\langle B_w\rangle$.
\end{conj}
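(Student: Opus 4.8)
The plan is to reduce the conjecture to a non-degeneracy statement expressed through the double leaves basis, and then to prove that non-degeneracy by a Hodge-theoretic induction on length. Fix $w\in W$, a reduced expression $\underline{w}=(s_1,\ldots,s_n)$ of $w$, and let $B_{\underline{w}}$ be the corresponding Bott-Samelson bimodule (the $M_w$ of Theorem \ref{LL1}). By Soergel's categorification theorem the class $\langle B_{\underline{w}}\rangle=\varepsilon(C'_{s_1}\cdots C'_{s_n})$ is a known $d$-invariant element of $\langle\mathcal{B}\rangle$, and by Proposition \ref{grande}(4) one has $B_{\underline{w}}\cong B_w\oplus\bigoplus_{y<w}p_yB_y$ with $p_y\in\NN[v,v^{-1}]$. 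Using Proposition \ref{grande}(3) and Soergel's description of $\varepsilon$, a standard reduction turns the equality $\varepsilon(C'_w)=\langle B_w\rangle$ into the following: for every $y\le w$, the local intersection form at $y$ on $B_{\underline{w}}$ --- the symmetric form on the top degree of the standard subquotient of $B_{\underline{w}}$ at $y$ in Soergel's $\Gamma$-filtration, whose rank governs the multiplicity of $B_y$ in $B_{\underline{w}}$ --- is non-degenerate. Expanding endomorphisms of $B_{\underline{w}}$ in the double leaves basis of Theorem \ref{LL1}, this form is computed by the scalars $\lambda(l,l')$ attached to the degree-zero leaves between $w$ and $y$, and its non-degeneracy amounts to the determinants $d(x,y)$ of Theorem \ref{co} being nonzero. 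So it suffices to prove $d(x,y)\neq 0$ in $\R$ for all $y\le x$; as these are real numbers, non-degeneracy is all that is needed at the end, and positivity will enter only along the way.

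For this I would establish, in $\mathcal{B}(W,\R)$, a Hodge package. Fix an ample element $\eta\in V^*=R_2$. For a Bott-Samelson $B_{\underline{w}}$ let $\overline{B}_{\underline{w}}:=\R\otimes_R B_{\underline{w}}\otimes_R\R$, obtained by killing the left and right actions of the ideal $R_+$ of positive-degree elements of $R$; this is a finite-dimensional graded vector space whose Poincar\'e polynomial is symmetric about degree $0$, carrying a canonical symmetric intersection form $\langle-,-\rangle$. The goal is to prove, by induction on $n=l(w)$ and for all Bott-Samelsons of length $n$ at once, the conjunction of: $(\mathrm{hL}_{\underline{w}})$, that for each $k\ge 0$ multiplication by $\eta^k$ carries the degree $-k$ part of $\overline{B}_{\underline{w}}$ isomorphically onto its degree $+k$ part; and $(\mathrm{HR}_{\underline{w}})$, that on the $\eta$-primitive subspace in degree $-k$ the pairing $(a,b)\mapsto\langle a,\eta^k b\rangle$ is definite of sign $(-1)^{(n-k)/2}$. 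Restricting to the summand $B_w$ gives the same statements for the indecomposables, and the non-degeneracy demanded in the first paragraph is exactly $(\mathrm{HR}_{\underline{w}})$ read off in the intermediate degree recording the standard subquotient at $y$; so the induction yields $\varepsilon(C'_w)=\langle B_w\rangle$.

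The inductive step runs in the spirit of de Cataldo and Migliorini's proof of the decomposition theorem, transposed to bimodules. Write $\underline{w}=\underline{w'}s$, so that $B_{\underline{w}}=B_{\underline{w'}}B_s$, and use the package already known for $B_{\underline{w'}}$ and for the shorter indecomposable summands of $B_{\underline{w'}}$. One first transports hard Lefschetz across the functor $-\otimes B_s$ --- a ``weak Lefschetz for the product'' --- to obtain $(\mathrm{hL}_{\underline{w}})$; then one deduces $(\mathrm{HR}_{\underline{w}})$ from Hodge-Riemann on the summands of $B_{\underline{w'}}$ by a deformation-of-signature argument: connect $\eta$ to a degenerate boundary Lefschetz operator for which the primitive form block-diagonalizes along the two ``standard copies'' of $B_{\underline{w'}}$ inside $B_{\underline{w}}$ --- the signs of the blocks being then read off from the inductive hypothesis --- and observe that along the connecting path the signature of the (non-degenerate) primitive form can change only where hard Lefschetz fails, which $(\mathrm{hL}_{\underline{w}})$, established just before, forbids.

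The hard part is this last step, and within it the weak Lefschetz statement for $-\otimes B_s$ together with the sign bookkeeping in $(\mathrm{HR}_{\underline{w}})$: one must show that $\eta$-multiplication on the part of $B_{\underline{w}}$ coming from the two copies of $B_{\underline{w'}}$ behaves like a hyperbolic extension, which requires the precise comparison between the Lefschetz operators for $W$ and for the finite parabolic $\langle s\rangle$, and control over the loci where the primitive form degenerates along the deformation path. This is exactly where the double leaves basis of Theorem \ref{LL1} becomes indispensable: it renders the intersection forms, the restriction maps $\rho_{x,y}$ of Section \ref{ca}, and the idempotents cutting out the summands $B_y$ simultaneously explicit and computable, so that the signs entering Hodge-Riemann and the loci where hard Lefschetz could fail can be tracked purely combinatorially --- which is essential, since for a general Coxeter group there is no underlying variety to appeal to.
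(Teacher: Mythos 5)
The statement you are asked to prove is not proved in this paper at all: it appears as Conjecture \ref{cs} and is quoted as a recent theorem of Elias and Williamson \cite{EW1}; the paper only uses it (e.g.\ in Section \ref{fav}) and contributes the double leaves basis that their proof relies on. So there is no internal proof to compare with, and your proposal is, in outline, a summary of the Elias--Williamson Hodge-theoretic route (reduction to non-degeneracy of local intersection forms, then hard Lefschetz and Hodge--Riemann by induction on length in the style of de Cataldo--Migliorini, with the light/double leaves making the forms computable). That is the correct and, at present, the only known strategy.

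As a proof, however, your sketch has genuine gaps. First, the space carrying the Hodge package should be $B\otimes_R\R$, killing only the \emph{right} action of $R_+$ and keeping the left $R$-module structure, with the Lefschetz operator being left multiplication by the ample class $\eta$; your $\R\otimes_R B\otimes_R\R$ kills too much (already for $B_s$ it is one-dimensional, sitting in degree $-1$, so it is not Poincar\'e-symmetric and no hard Lefschetz statement can hold on it). Second, and more seriously, the step you describe as ``transporting hard Lefschetz across $-\otimes B_s$'' followed by ``the signature can only change where hard Lefschetz fails'' is circular as stated: the deformation argument needs the whole family of operators $b\mapsto \eta b+\zeta\, b\eta$, Hodge--Riemann for all $\zeta>0$ on $B_{\underline{w}'}B_s$ (this part does follow from the inductive hypothesis by a tensor-product argument), and then a genuinely separate argument to obtain injectivity of the Lefschetz operator at $\zeta=0$ --- in Elias--Williamson this is where Rouquier complexes are used to factor the $\eta$-action, and it is the technical heart of their proof; without it, the limit $\zeta\to 0$ only yields semi-definiteness and hard Lefschetz can fail in the middle degrees. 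Third, you do not address the passage from the Bott--Samelson $B_{\underline{w}}$ to the indecomposable summand $B_w$ (the induction must carry Soergel's conjecture itself for all $y<w$ in order to identify the summands and to know, via Proposition \ref{grande}(3), that the idempotents and the forms restrict in the orthogonal way the signature bookkeeping requires). Until these three points are supplied, what you have is an accurate roadmap of \cite{EW1}, not a proof.
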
 

Its generalization to positive characteristic (not explicitly stated by Soergel but considered in \cite{So3}).

\begin{conj}[Soergel's $p$-conjecture]\label{csp}
If $k=\mathbb{F}_p$ and $W$ is a Weyl group, for every  $w\in W$ we have $\varepsilon(C'_w)=
\langle B_w\rangle$.
\end{conj} 

And now its affine generalization in positive characteristic. This conjecture was never considered by Soergel, but it seems reasonable for us to call it 

\begin{conj}[Soergel's affine $p$-conjecture]\label{cspa}
If $k=\mathbb{F}_p$ and $W$ is an affine Weyl group, for every  $w\in W^{\mathrm{res},-}$ we have $\varepsilon(C'_w)=
\langle B_w\rangle$.
\end{conj}

It is this last conjecture that implies (and should be equivalent to) Lusztig's conjecture.


\section{Double leaves basis}\label{Light leaves basis}
In this section $(W,\mathcal{S})$ is again an arbitrary Coxeter system.
Let us fix for the rest of this section the sequences $\underline{s}=(s_1,\ldots, s_n) \in \mathcal{S}^n$ and $\underline{r}=(r_1,\ldots, r_p)\in \mathcal{S}^p.$ 
In this section we will define the \emph{double leaves basis} ,  a basis of the space Hom$(B_{\underline{s}}, B_{\underline{r}})$  closely related with the light leaves basis (LLB) constructed in \cite{Li1}. The double leaves basis will be more useful for the purposes of this paper than  the LLB   because of its symmetry properties. We start by recalling the construction of the tree $\mathbb{T}_{\underline{s}}$.

\subsection{The tree $\mathbb{T}_{\underline{s}}$}\label{T}

\subsubsection{Three basic morphisms}  In this subsection we will introduce three important morphisms between Soergel bimodules. 
Let ${x}_s\in V^*$ be an equation of the hyperplane  fixed by $s\in\mathcal{S}$. We have a decomposition ${R}\simeq {R}^s\oplus {x}_s{R}^s,$ corresponding to $${R}\ni p=\frac{p+s\cdot p}{2}+\frac{p-s\cdot p}{2}.$$ We define the \emph{Demazure operator,} a morphism of graded $R^s$-modules
\begin{displaymath}
\begin{array}{lll}\smallskip
\partial_s:R(2)\rightarrow R, && p_1 +x_sp_2 \mapsto p_2.
\end{array}
\end{displaymath}

We will now define three morphisms in ${\mathcal{B}}$ that are the basic ingredients in the construction of the  double leaves basis.
The first one is the multiplication morphism 
\begin{displaymath}\label{d3}
\begin{array}{rll}
{m}_s :{B}_s &\rightarrow &  {R} \\
 {R}\otimes_{{R}^s}{R}(1)\ni p\otimes q &\mapsto & pq \\   
\end{array}
\end{displaymath}
The second one is the only (up to non-zero scalar) degree $-1$ morphism from ${B}_s{B}_s$ to ${B}_s$ :
\begin{displaymath}
\begin{array}{rll}
{j}_s : {B}_s{B}_s &\rightarrow &  {B}_s \\
 \vspace{0.8cm}{R}\otimes_{{R}^s}{R}\otimes_{{R}^s}{R}(2)\ni p\otimes q\otimes r &\mapsto & p{\partial}_s(q)\otimes r \\
\end{array}
\end{displaymath}
Consider the bimodule $X_{sr}=B_sB_rB_s\cdots$ the product having $m(s,r)$ terms (recall that $m(s,r)$ is the order of $sr$).
We define $f_{sr}$ as the only  degree $0$ morphism from $X_{sr}$ to $  X_{rs}$ sending $1\otimes 1\otimes \cdots \otimes 1$ to $1\otimes 1\otimes \cdots \otimes 1$. In \cite{Li3}, \cite{Li4} and \cite{EK} there are different explicit formulas for $f_{sr}$.

\subsubsection{Some choices}\label{Some choices}


For $x\in W$ consider the set $\mathrm{Rex}(x)$ of all reduced expressions of $x$. The \emph{graph of reduced expressions of}  $x$ (or $\mathrm{gRex}(x)$) is the graph with nodes the elements of $\mathrm{Rex}(x)$ and edges between nodes that are connected by a braid move. It is a known fact that this graph is connected. Moreover, for every couple 
 $(s,\underline{t})$ with  $s\in \mathcal{S}$ and $\underline{t}$ a reduced expression of some $x\in W$ satisfying that $l(xs)<l(x),$ there exists a path in $\mathrm{gRex}(x)$ starting at $\underline{t}$ and ending in an element $\underline{q}$ satisfying  that the last element in the sequence $\underline{q}$ is $s$. In fact there exist many paths satisfying this property, but we choose arbitrarily one of them and  call it $P(s,\underline{t}).$
 

For every couple of  nodes $(\underline{q},\underline{t})$ in $\mathrm{gRex}(x)$ that are connected by an edge (i.e. where $\underline{q}$ and $\underline{t}$ differ by a braid move) there is an associated morphism in $\mathrm{Hom}(B_{\underline{q}}, B_{\underline{t}})$, of the type $\mathrm{id}\otimes f_{sr} \otimes \mathrm{id}$ (we regard the morphism $f_{sr}$ as a categorification of the braid relation). In this way, every directed path in $\mathrm{gRex}(x)$ gives a morphism from the Bott-Samelson bimodule corresponding to the starting node to that corresponding to the ending node. We define  $F_i(\underline{t})$ to be the morphism between Bott-Samelson bimodules associated to $P(s_i,\underline{t}).$


 Let us fix, for every $x\in W$ a reduced expression $\underline{s^x}\in \mathrm{Rex}(x)$. Also,  for any reduced expression $\underline{t}\in\mathrm{Rex}(x)$ we fix a directed path in $\mathrm{gRex}(x)$ starting in $\underline{t}$ and ending in $\underline{s^x}.$ We denote by $F(\underline{t},\underline{s^x})$ the corresponding morphism from $B_{\underline{t}}$ to $B_{\underline{s^x}}.$ 

\subsubsection{Construction of  $\mathbb{T}_{\underline{s}}$}\label{Ts}

We construct a perfect binary tree with nodes colored by Bott-Samelson bimodules and arrows colored by morphisms from parent to  child nodes. We construct it by induction on the depth of the nodes. In depth  one we have the following tree:

\vspace{0.5cm}

\centerline{
\xymatrix@C=0cm{
&B_{s_1} B_{s_2} \cdots B_{s_n}\ar[ddl]_{m_{s_1}\otimes\mathrm{id}^{n-1}} \ar[ddr]^{ \mathrm{id}} & \\
&& \\
B_{s_2} \cdots B_{s_n}&  & B_{s_1} \cdots B_{s_n}\\
 }
}
\vspace{0.5cm}
Let $k<n$ and $\underline{t}=(t_1,  \cdots, t_{i})\in  \mathcal{S}^{i}$ be such that a node $N$ of depth $k-1$ is colored by the bimodule $(B_{t_1}  \cdots B_{t_{i}})(B_{s_{k}} \cdots B_{s_n}),$ then we have two cases.

\begin{enumerate}
\item If we have the inequality $l(t_1\cdots t_{i}s_k)>l(t_1\cdots t_{i})$, then the child nodes and child edges of $N$ are colored in the following way:
\vspace{0.5cm}

 \centerline{
\xymatrix@C=0cm{
&(B_{t_1}  \cdots B_{t_{i}})(B_{s_{k}} \cdots B_{s_n})\ar[ddl]_{\mathrm{id}^{i}\otimes m_{s_{k}}\otimes\mathrm{id}^{}} \ar[ddr]^{ \mathrm{id}} & \\
&& \\
(B_{t_1}  \cdots B_{t_{i}})(B_{s_{k+1}} \cdots B_{s_n})&  & (B_{t_1}  \cdots B_{t_{i}})(B_{s_{k}} \cdots B_{s_n})\\
 }
}
\item If we have the opposite inequality $l(t_1\cdots t_{i}s_k)<l(t_1\cdots t_{i})$, then the child nodes and child edges of $N$ are colored in the following way (arrows are the composition of the corresponding pointed arrows):
 
 \vspace{0.5cm}
 \centerline{
\xymatrix@C=0cm{ 
& (B_{t_1}  \cdots B_{t_{i}})(B_{s_{k}} \cdots B_{s_n})\ar@/^15mm/[ddddr] \ar@/_15mm/[ddddl]
  \ar@{-->}[d]^{F_k(\underline{t})\otimes \mathrm{id}}&&   \\
&(B_{t'_1}  \cdots B_{t'_{i-1}}B_{s_{k}})(B_{s_{k}} \cdots B_{s_n})\ar@{-->}[d]^{\mathrm{id}^{i-1}\otimes j_{s_{k}}\otimes \mathrm{id}}\\
&    B_{t'_1}  \cdots B_{t'_{i-1}}B_{s_{k}} \cdots B_{s_n}\ar@{-->}[ddl]_{\mathrm{id}^{i-1}\otimes m_{s_{k}}\otimes \mathrm{id}}
 \ar@{-->}[ddr]^{\mathrm{id}}&&  \\
 &&&  \\
B_{t'_1}  \cdots B_{t'_{i-1}}B_{s_{k+1}} \cdots B_{s_n}&& B_{t'_1}  \cdots B_{t'_{i-1}}B_{s_{k}} \cdots B_{s_n}&
 & && &  & &  }
}

\end{enumerate}

In the last step of the construction, i.e. $k=n$, we do exactly the same arrows as in cases $(1)$ and $(2)$ but we end by composing each one of the lower Bott-Samelsons with a morphism of the type $F(\underline{t},\underline{s^x})$ (see Section \ref{Some choices}) with $\underline{t}$ being a reduced expression of $x\in W.$ So we have that each leaf of the tree is colored by a bimodule of the form $B_{\underline{s^x}}$ for some $x\in W.$ This finishes the construction of $\mathbb{T}_{\underline{s}}.$


By composition of the corresponding arrows we can see every leaf  of the tree $\mathbb{T}_{\underline{s}}$ colored by $B_{\underline{s^x}}$ as a morphism in the space $\mathrm{Hom}(B_{\underline{s}},B_{\underline{s^x}}).$
Consider the set $\mathbb{L}_{\underline{s}}(\mathrm{id}),$ the leaves of  $\mathbb{T}_{\underline{s}}$ that are colored by the bimodule $R$.  
 In \cite{Li1}  the set $\mathbb{L}_{\underline{s}}(\mathrm{id})$ is called \textit{light leaves basis} and the following theorem is proved.

\begin{thm}\label{LLB}
The set $\mathbb{L}_{\underline{s}}(\mathrm{id})$ is a basis of $\mathrm{Hom}(B_{\underline{s}},R)$ as a left $R$-module.
\end{thm}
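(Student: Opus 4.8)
\emph{Proof strategy.} The plan is to argue by induction on $n$ (the length of $\underline s$, so that $\underline s\in\mathcal S^n$), establishing $R$-linear independence of $\mathbb{L}_{\underline s}(\mathrm{id})$ directly and then upgrading it to ``basis'' by a graded dimension count. When $n=0$ we have $B_{\underline s}=R$, $\mathrm{Hom}(R,R)=R$, and $\mathbb T_{\underline s}$ is a single leaf carrying $\mathrm{id}_R$, so there is nothing to prove; assume $n\ge 1$. The inductive step analyses the two subtrees hanging from the root of $\mathbb T_{\underline s}$: the left one is a copy of $\mathbb T_{(s_2,\dots,s_n)}$ precomposed with $m_{s_1}\otimes\mathrm{id}^{n-1}$, and the right one is the $s_1$-shifted variant of $\mathbb T_{(s_2,\dots,s_n)}$ obtained by freezing the prefix $B_{s_1}$ (processing $(s_2,\dots,s_n)$ starting from the element $s_1$).

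\emph{Step 1: a dual pairing and linear independence.} Index the leaves of $\mathbb T_{\underline s}$ by sequences $e=(e_1,\dots,e_n)\in\{0,1\}^n$, where $e_k$ records, at depth $k$, whether one descends to the left child (the ``$m$-type'' step) or the right child (the ``$\mathrm{id}$-type'' step); write $\underline s^{\,e}\in W$ for the element reached along the branch $e$, so that the leaf $e$ is coloured $B_{\underline x}$ with $x=\underline s^{\,e}$ and $\mathbb L_{\underline s}(\mathrm{id})=\{\mathbb L_e:\underline s^{\,e}=\mathrm{id}\}$; each $\mathbb L_e$ is homogeneous, of degree $d(e)$. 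On the other side, $B_{\underline s}=R\otimes_{R^{s_1}}\cdots\otimes_{R^{s_n}}R(n)$ is free as a graded left $R$-module on the $2^n$ elements $c_\delta=1\otimes x_{s_1}^{\delta_1}\otimes\cdots\otimes x_{s_n}^{\delta_n}$, $\delta\in\{0,1\}^n$ (using the splitting $R=R^{s_k}\oplus x_{s_k}R^{s_k}$ in the $k$-th slot, with $x^0=1$, $x^1=x$), and $\deg c_\delta=2|\delta|-n$. The heart of this step is a unitriangularity statement, which I would prove by induction on $n$ by propagating $c_\delta$ down the tree and unwinding the Demazure operator $\partial_{s_k}$ through the morphisms $m_{s_k}$, $j_{s_k}$ and the braid isomorphisms $f_{sr}$ met along a branch: for each $e$ with $\underline s^{\,e}=\mathrm{id}$ there is a distinguished $\delta(e)\in\{0,1\}^n$ — the lexicographically least subset of minimal cardinality on which $\mathbb L_e$ does not vanish, forced by degree reasons to make $\mathbb L_e(c_{\delta(e)})$ a scalar — such that $e\mapsto\delta(e)$ is injective, $\mathbb L_e(c_{\delta(e)})\in R^{\times}$ (in fact $\pm1$), and $\mathbb L_{e}(c_{\delta(e')})=0$ whenever $\delta(e')\prec\delta(e)$ lexicographically. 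Granting this, evaluating a relation $\sum_e r_e\mathbb L_e=0$ successively on the $c_{\delta(e)}$ in increasing order of $\delta(e)$ forces every $r_e=0$, so $\mathbb L_{\underline s}(\mathrm{id})$ is $R$-linearly independent.

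\emph{Step 2: the dimension count and conclusion.} By Soergel's Hom formula (\cite{So2}, \cite{So4}), using that Bott--Samelson bimodules are filtered by the standard bimodules $R_x$ (with the relevant higher $\mathrm{Ext}$'s vanishing), $\mathrm{Hom}(B_{\underline s},R)$ is a graded free left $R$-module whose graded rank is the coefficient of $T_{\mathrm{id}}$ in $\varepsilon^{-1}\langle B_{\underline s}\rangle=C'_{s_1}C'_{s_2}\cdots C'_{s_n}$. Expanding this product of Kazhdan--Lusztig generators and reading off that coefficient by a standard subexpression count (going back to Deodhar) yields exactly $\sum_{e:\,\underline s^{\,e}=\mathrm{id}}v^{d(e)}$, i.e.\ the generating function of the degrees of the elements of $\mathbb L_{\underline s}(\mathrm{id})$. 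Since $\mathrm{Hom}(B_{\underline s},R)$ has finite-dimensional graded pieces, a homogeneous $R$-linearly independent family inside it whose degree generating function equals the graded rank must span; hence $\mathbb L_{\underline s}(\mathrm{id})$ is a basis, proving the theorem.

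\emph{Main obstacle.} The technical crux is the unitriangularity of Step 1. One must push the induction through both subtrees of the root — the bookkeeping in the right ($s_1$-shifted) subtree being the delicate point, since there the frozen letter $s_1$ must be absorbed by a later $j_{s_k}$ or $m_{s_k}$ — while controlling the ``leading'' pairing $\mathbb L_e(c_{\delta(e)})$ and checking that neither it nor the vanishing $\mathbb L_e(c_{\delta(e')})=0$ for $\delta(e')\prec\delta(e)$ is disturbed by the arbitrary choices baked into $\mathbb T_{\underline s}$ (the paths $P(s,\underline t)$ in the graphs of reduced expressions and the normalizing morphisms $F(\underline u,\underline x)$): these are invertible and of degree $0$, so they merely permute and rescale leaves, but one still has to verify that the triangular pattern survives. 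This robustness to the choices is precisely what makes the statement stable enough to globalize to the double leaves basis of Theorem \ref{LL1}.
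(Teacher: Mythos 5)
Your strategy is essentially the paper's own (it is the proof of \cite[Th\'eor\`eme 5.1]{Li1} that Section \ref{ac} adapts for the double leaves): linear independence via a unitriangular evaluation of the leaves against the monomial elements $x_{s_1}^{j_1}\otimes\cdots\otimes x_{s_n}^{j_n}\otimes 1$ with respect to the lexicographic order, where your injectivity claim $e\mapsto\delta(e)$ is exactly Lemma \ref{IJ} and the leading/vanishing behaviour is the \cite[Lemme 5.10]{Li1}-type input, followed by the graded count $d(\mathbb{L}_{\underline{s}}(\mathrm{id}))=\tau(C'_{\underline{s}})=\underline{\overline{\mathrm{rk}}}\,\mathrm{Hom}(B_{\underline{s}},R)$ coming from Soergel's inverse $\eta$. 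So your route coincides with the paper's, and your outline is correct.
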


\subsection{Construction of the double leaves basis}\label{LL}

In \cite{Li1} the problem of finding a basis of the space  $\label{h}H=\mathrm{Hom}(B_{\underline{s}},B_{\underline{r}})$ was solved using Theorem \ref{LLB} and using repeatedly the adjunction isomorphism \begin{equation}\label{a}\mathrm{Hom}(MB_s,N)\cong \mathrm{Hom}(M, NB_s),\end{equation} which is an isomorphism of  graded left $R$-modules. This gives one basis that we called  \emph{light leaves basis} in \cite{Li1}  of the space $H.$  In this section we will introduce a new basis, that we consider to be the natural generalization of the construction made in Section \ref{T} and that we will call the double leaves basis of $H.$ This basis can be thought of as follows. Take $\mathbb{T}_{\underline{s}}$  and ``paste" it with the  the  tree  $\mathbb{T}_{\underline{r}}$ inverted. This is the image we have to keep in mind. 

Let us be more precise. We need to introduce the adjoint morphisms (in the sense of the adjunction (\ref{a})) of $m_s, j_s,  f_{sr}$. The adjoint of $m_s$ is the morphism
\begin{displaymath}
\begin{array}{rll}
  {\epsilon}_s :{R}&\rightarrow &  {B}_s \\
1 &\mapsto & {x}_s\otimes 1+1\otimes  {x}_s.
\end{array} 
\end{displaymath}

The adjoint of $j_s$ is the morphism 

\begin{displaymath}
\begin{array}{rll}
  p_s :B_s &\rightarrow &  {B}_s{B}_s \\
a\otimes b &\mapsto & a\otimes 1\otimes b.
\end{array}
\end{displaymath}

And finally the adjoint morphism of $f_{sr}$ is $f_{rs}$.

 For any leaf $f:B_{\underline{r}}\rightarrow B_{\underline{s^x}}$ in $\mathbb{T}_{\underline{r}}$ we can find its \emph{adjoint leaf} $f^a:B_{\underline{s^x}} \rightarrow  B_{\underline{r}}$ by replacing each morphism in the set $\{m_s, j_s, f_{sr}\}$ by its adjoint. So we obtain a tree $\mathbb{T}_{\underline{r}}^a$ where the arrows go from children to parents.

Let $x,y\in W.$ If $f\in\mathrm{Hom}(B_{\underline{s}},B_{\underline{s^x}})$ and $g \in\mathrm{Hom}(B_{\underline{s^y}},B_{\underline{r}})$, we define
$$g\cdot f=
\begin{cases}
\hspace*{0cm} g\circ f \hspace*{0.1cm} \mathrm{if}\ x=y\\
\hspace*{0.3cm} \emptyset\hspace*{0.4cm} \mathrm{if}\ x\neq y
\end{cases}$$

Let  $\mathbb{L}_{\underline{s}}$ be the set of leaves of $\mathbb{T}_{\underline{s}},$ this is

$$\mathbb{L}_{\underline{s}}\subset \coprod_{x\in W}\mathrm{Hom}(B_{\underline{s}},B_{\underline{x}})$$

 We call the set $\mathbb{L}_{\underline{r}}^a\cdot \mathbb{L}_{\underline{s}}$ the \emph{double leaves basis}  of $\mathrm{Hom}(B_{\underline{s}}, B_{\underline{r}}).$  
 \begin{thm}\label{LL}
The double leaves basis  is a basis as  left $R$-module of the space $\mathrm{Hom}(B_{\underline{s}}, B_{\underline{r}}).$ 
\end{thm}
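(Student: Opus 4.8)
\textbf{Proof plan for Theorem \ref{LL}.}

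The plan is to deduce that the DLB is a basis from Theorem \ref{LLB} by a doubly iterated application of the adjunction isomorphism \eqref{a}, combined with an upper-triangularity argument with respect to the Bruhat order. First I would recall that, by Theorem \ref{LLB} applied to the sequence $\underline{s}$, the light leaves $\mathbb{L}_{\underline{s}}(\mathrm{id})$ form an $R$-basis of $\mathrm{Hom}(B_{\underline{s}}, R)$; and more generally the full set of leaves $\mathbb{L}_{\underline{s}}$ of $\mathbb{T}_{\underline{s}}$, after applying the adjunction $\eqref{a}$ repeatedly to move all the $B_{\underline{x}}$-factors in $\mathrm{Hom}(B_{\underline{s}}, B_{\underline{x}})$ over to the left, is an $R$-basis of $\bigoplus_{x} \mathrm{Hom}(B_{\underline{s}}, B_{\underline{x}})$ (this is the ``light leaves basis'' of \cite{Li1}; one needs the elementary fact that adjunction $\eqref{a}$ sends $\mathbb{L}_{\underline{s}}$ bijectively onto a set of light leaves for the longer Bott--Samelson sequence). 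Dually, the adjoint leaves $\mathbb{L}^a_{\underline{r}}$ give, for each $x$, a set in $\mathrm{Hom}(B_{\underline{x}}, B_{\underline{r}})$ which, via the adjunction run in the other direction, is also an $R$-basis of each $\mathrm{Hom}(B_{\underline{x}}, B_{\underline{r}})$. The point is that composing these two bases yields a spanning set of $\mathrm{Hom}(B_{\underline{s}}, B_{\underline{r}})$ of the right cardinality.

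The key steps, in order, would be: (1) record that $\bigoplus_{x\in W}\mathrm{Hom}(B_{\underline{s}}, B_{\underline{x}})$ is a free left $R$-module with basis $\mathbb{L}_{\underline{s}}$, graded-rank given by summing the graded ranks over all leaves; (2) dually, $\bigoplus_{x\in W}\mathrm{Hom}(B_{\underline{x}}, B_{\underline{r}})$ is free with basis $\mathbb{L}^a_{\underline{r}}$; (3) establish the composition/evaluation pairing $\mathrm{Hom}(B_{\underline{x}},B_{\underline{r}})\otimes_R \mathrm{Hom}(B_{\underline{s}},B_{\underline{x}})\to \mathrm{Hom}(B_{\underline{s}},B_{\underline{r}})$ and show that the total space $\mathrm{Hom}(B_{\underline{s}},B_{\underline{r}})$ has graded rank equal to $\sum_{x}\mathrm{grk}\,\mathrm{Hom}(B_{\underline{s}},B_{\underline{x}})\cdot\mathrm{grk}\,\mathrm{Hom}(B_{\underline{x}},B_{\underline{r}})$ — this is a Soergel-bimodule character computation, using that $\mathrm{grk}\,\mathrm{Hom}(B_{\underline{s}},B_{\underline{r}})$ equals the Hecke-algebra pairing $\langle \underline{H}_{\underline{s}}, \underline{H}_{\underline{r}}\rangle$ and that inserting $\sum_x$ of the standard ``$R_x$'' intermediate terms is exactly the factorization of this pairing through the standard basis; (4) conclude that $\mathbb{L}^a_{\underline{r}}\cdot\mathbb{L}_{\underline{s}}$ has exactly the right size, so that it suffices to prove it spans (or, equivalently, is $R$-linearly independent).

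For the spanning/independence step I would run an induction on the length of $\underline{s}$ (or $\underline{r}$) together with a filtration of $B_{\underline{s}}$ by the ``standard'' sub/quotient bimodules $R_{\le x}$, $R_x$ coming from the support stratification of $V\times V$; restricting a homomorphism to each stratum $\mathrm{Gr}(x)$ and using that the leaves of $\mathbb{T}_{\underline{s}}$ colored by $B_{\underline{x}}$ are precisely engineered (by the two-case construction in Section \ref{Ts}, via the morphisms $m_s$, $j_s$, $F_k$) to be ``upper-triangular'' with respect to this stratification. Concretely, a double leaf $g\cdot f$ with $f$ landing in $B_{\underline{x}}$ has support controlled by $x$, and its ``leading term'' on the stratum $\mathrm{Gr}(x)$ is (a unit multiple of) the corresponding product of light-leaf leading terms, which are known to be a basis of $\mathrm{Hom}(R_{\underline{s}}^x, (R_{\underline{r}})^x)\cong R_x$-valued pairings by the light leaves theorem; an upper-triangular change of basis then finishes it.

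The main obstacle I expect is step (3)/(4): making the character bookkeeping rigorous, i.e. verifying that $\mathrm{grk}\,\mathrm{Hom}(B_{\underline{s}},B_{\underline{r}})$ really factors as $\sum_x \mathrm{grk}\,\mathrm{Hom}(B_{\underline{s}},B_{\underline{x}})\cdot\mathrm{grk}\,\mathrm{Hom}(B_{\underline{x}},B_{\underline{r}})$ with the graded shifts (the $v^{\ell(x)}$ twists from $R_x\cong R(\text{shift})$) matching perfectly, and simultaneously that the composition of an adjoint leaf with a leaf does not accidentally vanish or produce a relation — equivalently, that the pairing between $\mathbb{L}_{\underline{s}}$-leaves landing in $B_{\underline{x}}$ and $\mathbb{L}^a_{\underline{r}}$-leaves starting from $B_{\underline{x}}$ is perfect after restriction to the open stratum $\mathrm{Gr}(x)$. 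This is exactly where the specific normalization of the fixed reduced expressions $\underline{x}$ (chosen in Section \ref{Some choices} to be palindromic, so that $\underline{x}$ and $\underline{x^{-1}}$ agree) and of the basic morphisms $m_s,\epsilon_s,j_s,p_s$ is used; once that local perfectness is in hand, the global statement follows from the support filtration by a standard dévissage.
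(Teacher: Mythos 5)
The counting at the core of your steps (1)--(4) is incorrect. The set $\mathbb{L}_{\underline{s}}$ is not an $R$-basis of $\bigoplus_{x}\mathrm{Hom}(B_{\underline{s}},B_{\underline{x}})$ (and likewise for your step (2)): already for $\underline{s}=(s)$ the unique leaf landing in $B_s$ is the identity, while $\mathrm{Hom}(B_s,B_s)$ is free of rank two over $R$, spanned by $\mathrm{id}$ and $\epsilon_s\circ m_s$. What is true, and what the degree count actually uses, is that the leaves landing in $B_{\underline{x}}$ match the graded rank of $\mathrm{Hom}(B_{\underline{s}},R_x)$, i.e. $d(\mathbb{L}_{\underline{s}}(x))=\underline{\overline{\mathrm{rk}}}\,\mathrm{Hom}(B_{\underline{s}},R_x)=p_x^{\underline{s}}$ (formula (\ref{grados}); after composing with $\beta$ they are a basis of $\mathrm{Hom}(B_{\underline{s}},R_x)$, Proposition \ref{Lb}). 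Consequently the rank factorization you display in (3), with Bott--Samelson intermediates $B_{\underline{x}}$, is false; the correct identity is $\underline{\overline{\mathrm{rk}}}\,\mathrm{Hom}(B_{\underline{s}},B_{\underline{r}})=\sum_{x}p_x^{\underline{s}}p_x^{\underline{r}}$, i.e. the factorization through the standard bimodules $R_x$, obtained from the adjunction (\ref{g}) together with $\tau(\widetilde{T}_x\widetilde{T}_{y^{-1}})=\delta_{x,y}$ and the observation that a leaf and its adjoint have the same degree. So your conclusion in (4) that the DLB has ``exactly the right size'' does not follow from (1)--(3) as written, although the conclusion itself is correct once the count is routed through the $R_x$.

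For linear independence your outline defers exactly the point that carries the proof: that the pairing of leaves against adjoint leaves is triangular with nonvanishing leading term. Asserting that the leading term of $g\cdot f$ on $\Gr(x)$ is a unit multiple of a product of light-leaf leading terms, or that the pairing is ``perfect after restriction to the open stratum'', is not something you can quote; it is what has to be proved, and you yourself flag it as the main obstacle. The paper settles it by first showing (Lemma \ref{IJ}) that a double leaf is completely determined by the pair of $01$-sequences $(\underline{i},\underline{j})$, then introducing an explicit total order on such pairs, transporting the problem by adjunction into $\mathrm{Hom}(B_{\underline{s}}B_{\underline{r}^{\mathrm{op}}},R)$, and evaluating on the explicit elements $x^{\underline{j}'}\otimes x_{\underline{i}'}^{\mathrm{op}}$, which yields the unitriangularity (\ref{uni}) from the leading-term behaviour of light leaves established in \cite{Li1}; no induction on the length of $\underline{s}$ and no support d\'evissage is needed. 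If you wish to keep your stratification picture, you must at minimum supply an analogue of Lemma \ref{IJ} (so the double leaves are indexed by ordered data) and an explicit evaluation giving the triangularity; without that, the independence step of your proposal is a genuine gap.
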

 
 The proof of Theorem \ref{LL} is very similar to the proof of \cite[Th\'eor\`eme 5.1]{Li1}. We will write down all the details  in Section \ref{ac}.


\section{Soergel's conjecture for affine Weyl groups}\label{Lus}
In  sections \ref{fav} through \ref{nonex}  we consider ${W}$  an arbitrary Coxeter system and  $k$ a field of characteristic zero. 
 We will find a recursive algorithm to find the set of ``good primes" for Soergel's conjecture.  

 \subsection{The favorite projector in any Bott-Samelson}\label{fav} 
 Let $x\in {W}$and $s\in \mathcal{S}$ be such that $l(xs)>l(x).$ Then we have the following formula in the Hecke algebra
 $$C'_xC'_s=C'_{xs}+\sum_{y<xs}m_yC'_y,\hspace{0.5cm}\mathrm{with}\  m_y\in \mathbb{N}.$$ 
 As Soergel's $0$-conjecture is now a theorem, this formula implies
 \begin{equation}B_xB_s\cong B_{xs}\oplus \bigoplus_{y<xs}B_y^{\oplus m_y},\hspace{0.5cm}\mathrm{with}\  m_y\in \mathbb{N}.\end{equation}
 By Proposition \ref{ind} (recall that we are working with a field of characteristic $0$) we see that there is only one projector $p_{x,s}^{xs}$ in $\mathrm{End}(B_xB_s)$ whose image is $B_{xs}.$ Moreover, if $z\in Z:=\{y\in W\,\vert\, m_y\neq 0\},$ there is only one projector $p_{x,s}^z$ in $\mathrm{End}(B_xB_s)$ whose image is $B_{z}^{\oplus m_z}.$
 
By the previous paragraph, for $\underline{t}$  a reduced expression of $y\in W,$ there is an obvious way to define by induction on the length of $\underline{t}$ a projector $p_{\underline{t}}$ in $\mathrm{End}(B_{\underline{t}})$ whose image is isomorphic to $B_{y}$. Let us give the details. If $\underline{t}=(t_1)$ then $p_{(t_1)}=\mathrm{id}\in\mathrm{End}(B_{t_1})$. 

Let $\underline{t}=(t_1,\ldots, t_k) \in \mathcal{S}^k$ be a reduced expression of $y\in W$ and suppose that we have defined $p_{\underline{t}'}$ in $B_{t_1}\cdots B_{t_{k-1}},$ with $\underline{t}'=(t_1,\ldots, t_{k-1})$ and $x=t_1\ldots t_{k-1}.$  As the image of $p_{\underline{t}'}\otimes \mathrm{id}\in \mathrm{End}(B_{\underline{t}})$ is isomorphic to $B_xB_{t_k}$, we define 
 $$p_{\underline{t}}=p_{x,t_k}^{y}\circ (p_{\underline{t}'}\otimes \mathrm{id}).$$

 \subsection{Explicit construction of the favorite projector}\label{ex}
 We will construct the favorite projector by induction in the length of the Bott-Samelson. We want to find the favorite projector of the expression $\underline{s}=(s_1,\ldots, s_n)$ and we suppose that we have explicitly constructed the favorite projector of every Bott-Samelson having less than $n$ terms. Let $\underline{s}'=(s_1,\ldots, s_{n-1}),$  $x=s_1\cdots s_{n-1}\in {W}$ and $s_n=s.$ 
  
If $\underline{q}$ and $\underline{t}$ are reduced expressions of some elements in $W$ and $f \in \mathrm{Hom}(B_{\underline{q}},B_{\underline{t}})$ then we denote $$pf:=p_{\underline{t}}\circ f \in \mathrm{Hom}(B_{\underline{q}},B_{\underline{t}}),$$ and $$fp:=f\circ p_{\underline{q}} \in \mathrm{Hom}(B_{\underline{q}},B_{\underline{t}}).$$
 We will see in Section \ref{ac} (Corollary \ref{p})  that the set $\mathbb{L}_{\underline{r}}^a\cdot p\mathbb{L}_{\underline{s}}$  is also a basis of  $\mathrm{Hom}(B_{\underline{s}}, B_{\underline{r}}).$ We  call it \emph{$p$-double leaves basis}.

  Let $P=p_{\underline{s}'}\otimes \mathrm{id}\in \mathrm{End}(B_{\underline{s}})$. By definition of $p_{\underline{s}'}$ we have  $\mathrm{Im}(P)\cong B_xB_s.$ Let us define the numbers $m_y\in \mathbb{N}$ by the isomorphism
$$\mathrm{Im}(P)\cong B_{xs}\oplus \bigoplus_{y<xs}B_y^{\oplus m_y}.$$
We introduce some notation.
\begin{nota}
If $l,l'\in \mathbb{L}_{\underline{s}}$ and their targets agree,  we denote by $(l;l')$ the element $\left.P \, l^a\cdot pl'\right|_{\mathrm{Im}(P)}\in \mathrm{End}(\mathrm{Im}P).$ If the targets of $l$ and $l'$ do not agree, we define  $(l;l')$ as the empty set.
\end{nota}

\begin{nota}
 Let $X$ be a set of homogeneous elements of a graded vector space. We define $X_0$ as the subset of degree zero elements of $X$.
\end{nota}

It is clear that the set $(\mathbb{L}_{\underline{s}};\mathbb{L}_{\underline{s}})_0$ 
 generates  $\underline{\mathrm{End}}(B_{x}B_{s})\subseteq \underline{\mathrm{End}}(B_{\underline{s}}).$  


Let $l'\cdot p l\in (\mathbb{L}_{\underline{s}}^a\cdot p\mathbb{L}_{\underline{s}})_0.$ If $\mathrm{deg}(l)<0$ then by Proposition \ref{ind} , we have $\left. pl\right|_{\mathrm{Im}(P)}=0.$ On the other hand, if $\mathrm{deg}(l')<0$ then by Proposition \ref{ind}, we have $ Pl'p=0.$
So we deduce the equality $$(\mathbb{L}_{\underline{s}};\mathbb{L}_{\underline{s}})_0=\left.P \,(\mathbb{L}_{\underline{s}}^a)_0\cdot p(\mathbb{L}_{\underline{s}})_0\right|_{\mathrm{Im}(P)}$$

\begin{defi}For $x\in W$ we define $\mathbb{L}_{\underline{s}}(x)$ as the subset of $\mathbb{L}_{\underline{s}}$ consisting of the elements belonging to $\mathrm{Hom}(B_{\underline{s}},B_{\underline{x}})$.
For $z\in Z:=\{y\in W\,\vert\, m_y\neq 0\}$ define  $\mathbb{L}_{z}$ as the set of leaves $l\in (\mathbb{L}_{\underline{s}}(z))_0$ satisfying that $\left.pl\right|_{\mathrm{Im}(P)}\neq 0.$\end{defi}
\begin{lem}\label{P}
The set $(\mathbb{L}_{{z}};\mathbb{L}_{{z}})$
 generates  $\underline{\mathrm{End}}(B_{z}^{\oplus m_z})\subseteq \underline{\mathrm{End}}(B_{\underline{s}}),$ where we see the bimodule $B_{z}^{\oplus m_z}$ included in $B_{\underline{s}}$ via the formula $$B_{z}^{\oplus m_z}=\mathrm{Im}(p_{x,s}^{z}\circ (p_{\underline{s}'}\otimes \mathrm{id})).$$  
\end{lem}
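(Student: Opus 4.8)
The plan is to mimic, at the level of the summand $B_z^{\oplus m_z}$, the argument that was just given for the summand $B_xB_s=\mathrm{Im}(P)$, replacing $P$ by the projector $P_z:=p_{x,s}^{z}\circ(p_{\underline{s}'}\otimes\mathrm{id})\in\mathrm{End}(B_{\underline s})$. Write $Q=p_{\underline{s}'}\otimes\mathrm{id}=P$, so $P_z=p^z_{x,s}\circ Q$ and $\mathrm{Im}(P_z)\cong B_z^{\oplus m_z}$ sits inside $\mathrm{Im}(Q)\cong B_xB_s$. First I would recall that by Theorem \ref{LL} (and the $p$-version, Corollary \ref{p}, which will be available) the set $\mathbb{L}_{\underline s}^a\cdot p\mathbb{L}_{\underline s}$ is an $R$-basis of $\mathrm{End}(B_{\underline s})$; composing such a basis on both sides with the idempotent $P_z$ produces a generating set (over $R$, hence over $k$ in each fixed degree) of $P_z\,\mathrm{End}(B_{\underline s})\,P_z\cong\mathrm{End}(B_z^{\oplus m_z})$. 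Restricting to degree zero on the left and to $\mathrm{Im}(P_z)$, we get that $\left.P_z\,(\mathbb{L}_{\underline s}^a)\cdot p\mathbb{L}_{\underline s}\,\right|_{\mathrm{Im}(P_z)}$ generates $\underline{\mathrm{End}}(B_z^{\oplus m_z})$; this is the exact analogue of the displayed equality preceding the lemma.

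The second step is the degree bookkeeping that kills all but the wanted leaves. Exactly as in the paragraph before the lemma: if $l'\cdot pl$ is a degree zero element of the basis and $\deg(l)<0$, then $\left.pl\right|_{\mathrm{Im}(Q)}=0$ by Proposition \ref{grande}(3) (a negative-degree morphism into a direct sum of $B_y$'s, precomposed with the projection onto $B_xB_s$, vanishes), hence a fortiori its further restriction to $\mathrm{Im}(P_z)\subseteq\mathrm{Im}(Q)$ vanishes; symmetrically $\deg(l')<0$ forces $P_z l'^a p=0$. So only the degree zero leaves $l,l'$ survive, and we may replace $(\mathbb{L}_{\underline s}^a)\cdot p\mathbb{L}_{\underline s}$ by $(\mathbb{L}_{\underline s}^a)_0\cdot p(\mathbb{L}_{\underline s})_0$. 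Next, among degree zero leaves $l$, those landing in $B_{\underline y}$ with $y\neq z$ contribute nothing after composing with $P_z$: since $B_z^{\oplus m_z}$ and the $B_y^{\oplus m_y}$ with $y\neq z$ are different isotypic summands of $\mathrm{Im}(Q)$, and $B_{\underline y}$ decomposes (Proposition \ref{grande}(\ref{3})) with $B_w$'s for $w\le y$, a degree zero composite $B_z^{\oplus m_z}\hookrightarrow B_{\underline s}\xrightarrow{pl^a}$ through $B_{\underline y}$ and back is zero by the Hom-orthogonality in Proposition \ref{grande}(3); likewise on the other side. Hence only leaves in $(\mathbb{L}_{\underline s}(z))_0$ on both sides matter, and those with $\left.pl\right|_{\mathrm{Im}(Q)}=0$ (equivalently $\left.pl\right|_{\mathrm{Im}(P)}=0$ in the notation of the excerpt, since $\mathrm{Im}(P)=\mathrm{Im}(Q)$) contribute zero as well. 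That is precisely the definition of $\mathbb{L}_z$, so we are left with $\left.P_z\,(\mathbb{L}_z^a)\cdot p\mathbb{L}_z\,\right|_{\mathrm{Im}(P_z)}$ generating $\underline{\mathrm{End}}(B_z^{\oplus m_z})$.

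Finally I would identify this surviving set with $(\mathbb{L}_z;\mathbb{L}_z)$. By definition $(l;l')=\left.P\,l^a\cdot pl'\,\right|_{\mathrm{Im}(P)}$; one checks that, for $l,l'\in\mathbb{L}_z$, composing further with the idempotent cutting out $B_z^{\oplus m_z}$ inside $\mathrm{Im}(P)$ — i.e. passing from $\left.\cdot\right|_{\mathrm{Im}(P)}$ to $\left.\cdot\right|_{\mathrm{Im}(P_z)}=P_z(\,\cdot\,)P_z$ — is automatic, because $l,l'$ factor through $B_{\underline z}$ in degree zero and, again by Proposition \ref{grande}(3) applied inside $\mathrm{Im}(P)$, any degree zero endomorphism of $\mathrm{Im}(P)=B_xB_s$ that factors through $B_{\underline z}$ in degree zero already lands in (and is supported on) the isotypic component $B_z^{\oplus m_z}$. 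Thus $(\mathbb{L}_z;\mathbb{L}_z)$ and $\left.P_z(\mathbb{L}_z^a)\cdot p\mathbb{L}_z\right|_{\mathrm{Im}(P_z)}$ coincide as subsets of $\underline{\mathrm{End}}(B_z^{\oplus m_z})$, which completes the proof.

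The step I expect to be the main obstacle is the orthogonality bookkeeping in the second and third paragraphs: one must be careful that "factors through $B_{\underline y}$ in degree zero" together with Proposition \ref{grande}(3) genuinely forces a composite through a different isotypic summand to vanish, rather than merely forcing it to have no component in $B_z^{\oplus m_z}$. The clean way is to work entirely inside the Krull--Schmidt category $\mathcal B$ (or $\mathcal B^\circ$ in the affine case): fix decompositions $B_{\underline s}\cong\bigoplus B_w(j)^{\oplus n_{w,j}}$ and note that a degree zero endomorphism of $B_{\underline s}$ lies in $P_z\,\mathrm{End}(B_{\underline s})\,P_z$ iff, in matrix form against these decompositions, it is supported on the $B_z$-isotypic block; then each of the vanishing claims above is just the statement that the relevant matrix block is forced to be zero by $\underline{\mathrm{Hom}}(B_x,B_y)=0$ for $x\neq y$. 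Everything else is the verbatim analogue of the computation carried out for $\mathrm{Im}(P)$ in Section \ref{ex}, so no new input beyond Theorem \ref{LL}, Corollary \ref{p} and Proposition \ref{grande} is needed.
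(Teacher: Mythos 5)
Your overall strategy --- compress the $p$-double leaves basis of $\mathrm{End}(B_{\underline{s}})$ by the idempotent cutting out the $B_z$-isotypic part, then kill terms by degree arguments and by the Hom-orthogonality of Proposition \ref{grande} (3) --- is the same as the paper's. But there is a genuine gap at the point where you pass from $(\mathbb{L}_{\underline{s}}(z))_0$ to $\mathbb{L}_z$. The set $\mathbb{L}_z$ is defined by the one-sided condition $\left.pl\right|_{\mathrm{Im}(P)}\neq 0$, which concerns a leaf only in its outgoing role $pl'$. For the incoming slot you must show that $l\notin\mathbb{L}_z$ forces $P\,l^a p_{\underline{z}}=0$ (equivalently $P_z\,l^a p_{\underline{z}}=0$); your sentence ``those with $\left.pl\right|_{\mathrm{Im}(Q)}=0$ contribute zero as well'' only settles the $pl'$ slot, where the vanishing is immediate by restriction. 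Without the incoming-side statement your reduction only shows that the larger set $\{(l;l')\,:\,l\in(\mathbb{L}_{\underline{s}}(z))_0,\ l'\in\mathbb{L}_z\}$ generates the block, and the identification with $(\mathbb{L}_z;\mathbb{L}_z)$ in your last step does not go through. The paper closes exactly this point with an adjunction argument: $l\in\mathbb{L}_z$ if and only if $l\in(\mathbb{L}_{\underline{s}}(z))_0$ and $Pl^ap_{\underline{z}}\neq 0$, whence $(l;l')\neq 0$ if and only if both $l,l'\in\mathbb{L}_z$. You need this equivalence (or a substitute for it) explicitly.

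A second, smaller problem: the auxiliary claim in your third paragraph --- that any degree-zero endomorphism of $\mathrm{Im}(P)$ which factors through $B_{\underline{z}}$ in degree zero is supported on the $B_z$-isotypic component --- is false as stated, because the Bott--Samelson $B_{\underline{z}}$ itself contains lower indecomposables, possibly unshifted (e.g. $B_sB_tB_s\cong B_{sts}\oplus B_s$ in type $A_2$), so such an endomorphism can perfectly well live in a $B_y$-block with $y<z$. What saves you is that your elements do not merely factor through $B_{\underline{z}}$: because of the $p$ they factor through $p_{\underline{z}}$, hence through its image, which is isomorphic to the indecomposable $B_z$, and then Proposition \ref{grande} (3) does force them into the $B_z$-block. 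This is also how the paper argues (the images of $\left.pl\right|_{\mathrm{Im}(P)}$ and of $Pl^ap_{\underline{z}}$ are either isomorphic to $B_z$ or zero); with these two repairs your proof coincides with the paper's.
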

\begin{proof} 
If $l\in (\mathbb{L}_{\underline{s}}(z))_0$, by Proposition \ref{grande} (3) we know that both the images of  $\left.pl\right|_{\mathrm{Im}(P)}$ and of $Pl^ap_{\underline{s^z}}$ are either  isomorphic to $B_z$ or to zero. On the other hand, by adjunction arguments $l\in \mathbb{L}_{z}$ if and only if $l\in (\mathbb{L}_{\underline{s}}(z))_0$ and  $Pl^ap_{\underline{s^z}}\neq 0,$
 so we conclude that  $l_1,l_2\in (\mathbb{L}_{\underline{s}}(z))_0$ satisfy that $(l;l')\neq 0,$ if and only if   $l, l' \in\mathbb{L}_{z}.$
\end{proof}

A consequence of Lemma \ref{P} is that $$\sum_{l\in \mathbb{L}_{z}}\mathrm{Im}(P \,l^a p_{\underline{s^z}})=B_{z}^{\oplus m_z}\subseteq B_{\underline{s}}.$$

Let $ \mathbb{L}_{z}^{\oplus}=\{l_1,\ldots, l_b \}\subseteq \mathbb{L}_{z}$ be such that $$\bigoplus_{l\in \mathbb{L}_{z}^{\oplus}}\mathrm{Im}(P \,l^a p_{\underline{s^z}})=B_{z}^{\oplus m_z}\subseteq B_{\underline{s}}.$$
This set can be explicitly constructed. Let us be more precise.  We have that $\underline{\mathrm{End}}(B_z^{\oplus m_z})$ can be identified with the full ring of $m_z$ by $m_z$ matrices with coefficients in $k$. If we have that $l_1,\ldots, l_p$ belong to $ \mathbb{L}_{z}^{\oplus}$ we need to know if a new element $l$ of $ \mathbb{L}_{z}$ will belong or not to $ \mathbb{L}_{z}^{\oplus}$. It is easy to see that $\mathrm{Im}(P \,l^a p_{\underline{s^z}})$ is included or intersects only at zero the set $\bigoplus_{l=1}^{p}\mathrm{Im}(P \,l_i^a p_{\underline{s^z}}).$ To distinguish between this cases we just need to evaluate at one element, for example the minimum degree element in $B_{\underline{s^z}}$ (this element goes to itself under $p_{\underline{s^z}}$).

For $l\in \mathbb{L}_{z}$ we have seen that $\mathrm{Im}(P \,l^ap_{\underline{s^z}})\cong B_{z},$ so for $1\leq i\leq b$ we define $$\mathrm{Im}(P \,l_i^a p_{\underline{s^z}})=B_{z}^i\subseteq B_{\underline{s}}.$$
We will find the projector $p_z^i$ in $\mathrm{End}(B_{\underline{s}})$ onto $B_{z}^i$ as a linear combination of elements in the set  $(l_i;\mathbb{L}_{z}^{\oplus}).$
Define $$p_z^i=\sum_{j=1}^{b}\eta^{ij}_z(l_i;l_j).$$
We have the equation $$(l_{i'};l_j)\circ (l_i;l_{j'})=\lambda^{ij}_z (l_{i'};l_{j'}),$$
where $\lambda^{ij}_z\in \mathbb{Z}$ are uniquely defined by the equations 
$$ \lambda^{ij}_zp_{\underline{s^z}}=pl_j P \,l_{i}^ap_{\underline{s^z}}.$$
The fact that the sum $\oplus_i B_z^i$ is direct translates into the set of equations $$p_z^i p_z^j=\delta_{ij}p_z^i,$$ and this translates into the matrix equation \begin{equation}\label{lamda}\lambda_z \eta_z=\mathrm{Id},\end{equation} where $\lambda_z$ is the matrix with $(i,j)$ entry $\lambda^{ij}_z$ and 
$\eta_z$ is the matrix with $(i,j)$ entry $\eta^{ij}_z.$ As $p_z=\sum_{i=1}^b p_z^i$ is the projector onto $B_{z}^{\oplus m_z},$ we finally obtain the projector we ware searching for
 $$p_{\underline{s}}=P-\sum_{z\in Z}p_z.$$

 \subsection{Non-explicit construction of the favorite projector}\label{nonex}
In this section we give a non explicit construction of the favorite projector. We think that this construction if better suited for some calculations, for example, for finding bounds on the bad primes, and could eventually be made explicit by using perverse filtrations in Soergel bimodules.  

 We use the same notations and hypothesis as in Section \ref{ex}. For $ y\in W$ let $t_y\in \mathbb{N}[v,v^{-1}]$ be such that we have an isomorphism $$B_{s_1}\cdots B_{s_n}\cong \bigoplus_{y\in W}t_y B_y.$$
Of course, this isomorphism is not unique but we choose one. Let ${P}'$ be the projector onto the bimodule $$\bigoplus_{y\in W}t_y(0) B_y=B_{xs}\oplus\bigoplus_{z\in Z'}B_z^{\oplus m_z'}$$
 with $Z'=\{y\in W \, \vert \, t_y(0)\neq 0\}$. We have that ${m}_z'\geq m_z$ if $z\in Z\subseteq Z'$. 

Now repeat all the reasoning of section \ref{ex} but replacing $P$ by ${P}'$, $Z$ by $Z'$ and $m_z$ by ${m}_z'$. We have the corresponding objects ${\mathbb{L}}_z', ({\mathbb{L}}^{\oplus}_z)'$  and $ \lambda'_z.$
We obtain the formula $$p_{\underline{s}}={P}'-\sum_{z\in Z'}{p}_z.$$ 
The projector ${P'}$ is not constructed inductively, so the reader might ask why to do a similar, but non-explicit  construction? The important point is how to calculate the coefficients of ${\lambda}'_z.$ If $ ({\mathbb{L}}_{z}^{\oplus})'=\{{l}'_1,\ldots, {l}'_{{c}} \}\subseteq {\mathbb{L}}_{z}',$ for $1\leq i, j \leq {c}$ we have that ${(\lambda'_z)}^{ij}\in \mathbb{Z}$ is, as before, defined by the equation 
$$ {(\lambda'_z)}^{ij}{p}_{\underline{s^z}}=pl'_j {P'} \,(l'_{i})^ap_{\underline{s^z}}.$$
  As $\ker({P'})$ has no direct summand isomorphic to $B_{xs}$ nor to $B_z$ for any $z\in Z',$ by
Proposition \ref{grande} (2) we know that $$p{l}_j \mathrm{ker}({P}) \,{l}_{i}^ap_{\underline{s^z}}=0,$$ so we obtain the  simpler formula $$ {(\lambda'_z)}^{ij}{p}_{\underline{s^z}}=pl'_j  \,(l'_{i})^ap_{\underline{s^z}}.$$

So, in particular when $\mathbb{L}_{z}'$ consists of only one element (so $m'_z=1$), we obtain Theorem 2 in the introduction just by noticing that the lowest degree element of a Bott-Samelson (the element $1\otimes 1\otimes \cdots \otimes 1$) belongs to the biggest indecomposable, by degree considerations.

\subsection{From characteristic zero to  positive characteristic}\label{0p}

In this section we consider $W$ to be a Weyl group (or an affine Weyl group, in which case you have to replace everywhere $W$ by $W^\circ$). We have defined a representation $V=\oplus_{s\in \mathcal{S}}\mathbb{Z}e_s$ of $W$ over $\mathbb{Z}$. We define the symmetric algebra $R$ of $V^*=\mathrm{Hom}(V, \mathbb{Z}).$

We can define the category $\mathcal{B}_{\mathbb{Z}}$ of Soergel bimodules over $\mathbb{Z}$ just as before, with objects directs sums of direct summands of shifts of elements of the form $R\otimes_{R^s}R\otimes \otimes_{R^r}R\otimes\cdots \otimes_{R^t}R$ for $sr\cdots t$ a reduced expression in $W$. Let $\mathbb{F}_p$ be the finite field with $p$ elements, $\mathbb{Z}_p$ the $p$-adic integers and $\mathbb{Q}_p$ the $p$-adic numbers. The categories $\mathcal{B}_{\mathbb{F}_p}$ (resp. $\mathcal{B}_{\mathbb{Z}_p}$, $\mathcal{B}_{\mathbb{Q}_p}$) have objects $b_{\mathbb{Z}}\otimes_{\mathbb{Z}}\mathbb{F}_p$
(resp. $b_{\mathbb{Z}}\otimes_{\mathbb{Z}}\mathbb{Z}_p$, $b_{\mathbb{Z}}\otimes_{\mathbb{Z}}\mathbb{Q}_p$) with $b_{\mathbb{Z}}\in \mathcal{B}_{\mathbb{Z}}.$

As the double leaves is an $\mathbb{F}_p$ and $\mathbb{Q}_p$ basis respectively of the Hom spaces between the Bott-Samelsons in $\mathcal{B}_{\mathbb{F}_p}$ and $\mathcal{B}_{\mathbb{Q}_p}$, by Nakayama's lemma it is a ${\mathbb{Z}_p}-$basis of the Hom's in $\mathcal{B}_{\mathbb{Z}_p}.$

Consider the following functors 
\begin{itemize}
\item $\otimes_{\mathbb{Z}_p}\mathbb{F}_p: \mathcal{B}_{\mathbb{Z}_p}\rightarrow \mathcal{B}_{\mathbb{F}_p}$
\item $\otimes_{\mathbb{Z}_p}\mathbb{Q}_p: \mathcal{B}_{\mathbb{Z}_p} \rightarrow \mathcal{B}_{\mathbb{Q}_p}.$
\end{itemize}

Let $B_{\underline{s}}$ be a Bott-Samelson in $\mathcal{B}_{\mathbb{Z}}$, with $\underline{s}$ a reduced expression of $x\in W.$ Let $p_{\underline{s}}\in \mathrm{End}_{\mathcal{B}_{\mathbb{Q}_p}}(B_{\underline{s}}\otimes_{\mathbb{Z}_p}\mathbb{Q}_p)$ be the favorite projector onto $B_x$ as defined in Section \ref{fav}. There are two cases.

\begin{enumerate}
\item Let us suppose that in the expansion of $p_{\underline{s}}$ in the light leaves the coefficients have no denominators that are divisible by $p$. Then we can lift $p_{\underline{s}}$ to another primitive idempotent in $\mathrm{End}_{\mathcal{B}_{\mathbb{Z}_p}}(B_{\underline{s}}\otimes_{\mathbb{Z}}\mathbb{Z}_p)$ and then apply the functor $\otimes_{\mathbb{Z}_p}\mathbb{F}_p$ to obtain a primitive idempotent $p'_{\underline{s}}$ in $\mathrm{End}_{\mathcal{B}_{\mathbb{F}_p}}(B_{\underline{s}}\otimes_{\mathbb{Z}_p}\mathbb{F}_p)$.

For what follows the reader should read the notations in Section \ref{Some notations}. Let $M,N\in \mathcal{B}_{\mathbb{Z}}.$ As 
$$ \underline{\mathrm{dim}} \mathrm{Hom}_{\mathcal{B}_{\mathbb{F}_p}}(M,N)= \underline{\mathrm{rk}} \mathrm{Hom}_{\mathcal{B}_{\mathbb{Z}_p}}(M,N)= \underline{\mathrm{dim}} \mathrm{Hom}_{\mathcal{B}_{\mathbb{Q}_p}}(M,N),$$ 
(where we consider $M$ and $N$ inside the brackets as tensored with the corresponding field), we have that $$\eta_{\mathbb{Q}_p}(\mathrm{Im}(p_{\underline{s}}))=\eta_{\mathbb{F}_p}(\mathrm{Im}(p'_{\underline{s}}))\in \mathcal{H}.$$

\item Let us suppose that in the expansion of $p_{\underline{s}}$ in the light leaves basis there is at least one coefficient that has a denominator that is divisible by $p$. We will see that very idempotent in $\mathrm{End}_{\mathcal{B}_{\mathbb{F}_p}}(B_{\underline{s}}\otimes_{\mathbb{Z}_p}\mathbb{F}_p)$ can be lifted to an idempotent in $\mathrm{End}_{\mathcal{B}_{\mathbb{Z}_p}}$. In \cite[Th. 3.1 (b), (d)]{Th} this is proven in the case of an algebraically closed residue field. To see this for $\mathbb{F}_p$ we just have to remark that any finitely generated $\mathbb{Z}_p-$module $M$ is topologically complete and that the degree zero part of $\mathrm{End}_{\mathcal{B}_{\mathbb{Z}_p}}(B_{\underline{s}})$ is finite dimensional. 

Then, every idempotent in $\mathrm{End}_{\mathcal{B}_{\mathbb{F}_p}}(B_{\underline{s}}\otimes_{\mathbb{Z}_p}\mathbb{F}_p)$ can be lifted to an idempotent in $\mathrm{End}_{\mathcal{B}_{\mathbb{Q}_p}}(B_{\underline{s}}\otimes_{\mathbb{Z}_p}\mathbb{Q}_p)$, passing through $\mathbb{Z}_p$. Then Soergel's conjecture over $\mathbb{F}_p$ can not be true for all $y\leq x$ because if this was true then $B_{\underline{s}}\otimes_{\mathbb{Z}_p}\mathbb{Q}_p$ and $B_{\underline{s}}\otimes_{\mathbb{Z}_p}\mathbb{F}_p$ would have the same number of indecomposable summands but this can not be true because $\mathrm{End}_{\mathcal{B}_{\mathbb{Q}_p}}(B_{\underline{s}}\otimes_{\mathbb{Z}_p}\mathbb{Q}_p)$ has all the lifted idempotents of $\mathrm{End}_{\mathcal{B}_{\mathbb{F}_p}}(B_{\underline{s}}\otimes_{\mathbb{Z}_p}\mathbb{F}_p)$ and also $p_{\underline{s}}$. The fact that $p_{\underline{s}}$ is different from the other idempotents mentioned is due to the fact that the other idempotents as are lifted can not have a coefficient divisible by $p$ in the expansion of them in the light leaves basis. 
\end{enumerate}

So we conclude  that Soergel's $p-$conjecture (resp. Soergel's affine $p-$conjecture) is true if and only if $p$ does not belong to the set of primes that divide at least one of the denominators of the coefficients appearing in the expansion of all the $p_{\underline{s}}$ for $\underline{s}$ a reduced expression of an element of $W$ (resp. $W^\circ$).


\section{The double leaves basis is a basis}\label{ac} 
In this section we prove Theorem \ref{LL}.
 We  fix for the rest of this section the Coxeter system $(W,\mathcal{S})$ and the sequences $\underline{s}=(s_1,\ldots, s_n) \in \mathcal{S}^n$ and $\underline{r}=(r_1,\ldots, r_p)\in \mathcal{S}^p.$ 
We first prove that the graded degrees are the correct ones.

\subsection{Some notation}\label{Some notations}
The notations introduced in this section will be useful to understand what we mean when we say that the graded degrees are the correct ones.

  Given a 
$\mathbb{Z}$-graded vector space  $V=\bigoplus_i V_i$, with dim$(V)<\infty$, we define its graded dimension by the formula
$$ \underline{\mathrm{dim}}V =\sum (\mathrm{dim} V_i)v^{-i}\in \mathbb{Z}[v,v^{-1}].$$

 We define the graded rank of a finitely generated $\mathbb{Z}$-graded $R$-module $M$ as follows
$$\underline{\mathrm{rk}}M = \underline{\mathrm{dim}}(M/MR_+)\in \mathbb{Z}[v,v^{-1}], $$
where $R_+$ is the ideal of $R$ generated by the homogeneous elements of non zero degree.
We have $\underline{\mathrm{dim}}(V(1))=v(\underline{\mathrm{dim}}V)$ and
$\underline{\mathrm{rk}}(M(1))=v(\underline{\mathrm{rk}}M).$ We define 
$\underline{\overline{\mathrm{rk}}}M$ as the image of $\underline{\mathrm{rk}}M$ under
 $v\mapsto v^{-1}$.

Now that we have introduced these notations we can explain Soergel's inverse theorem. Soergel \cite[Theorem 5.3]{So4}  proves that the categorification $\varepsilon: \mathcal{H} \rightarrow \langle \mathcal{B}\rangle$ admits an inverse $\eta : \langle \mathcal{B}\rangle \rightarrow \mathcal{H}$ given by
\begin{equation}\label{eta}  \langle B\rangle \to \sum_{x\in W}
 \underline{\overline{\mathrm{rk}}}\mathrm{Hom}(B, R_x)T_x.\end{equation}

Recall that $R_x$ was defined in section \ref{ca}.
The following notations will be necessary to understand the graded rank formula for the Hom spaces given by Soergel.


$\bullet$ Let $X$ be a set of homogeneous elements of graded vector spaces. We define the \emph{degree of $X$} $$d(X)=\sum_{x\in X}q^{\mathrm{deg}(x)/2}.$$

$\bullet$ For every $x\in W,$ the functional $\tau_x :\mathcal{H}\rightarrow \mathbb{Z} [v,v^{-1}]$ is defined by
$$\tau_x\left(\sum_{y\in W}p_y\widetilde{T}_y\right) =p_x \,\,\,\,\,\,\,\,\,\,\,\, (p_y\in \mathbb{Z} [v,v^{-1}] \ \  \forall y\in W).$$
We denote $\tau_{\mathrm{id}}$ simply by $\tau.$

$\bullet$ Finally we define the element $C'_{\underline{s}}=C'_{s_1}\cdots
C'_{s_n}\in \mathcal{H}.$

\subsection{Graded degrees}\label{gd}

  Let $\underline{t}$ be an arbitrary sequence of elements in $\mathcal{S}$. We recall \cite[Lemma 5.6]{Li1} 
\begin{equation}\label{as}d(\mathbb{L}_{\underline{t}}(x))=\tau_x(C'_{\underline{t}}). \end{equation}
We remark that the construction of $\mathbb{T}_{\underline{t}}$ is motivated by (and can be seen as a categorification of)
this formula.
As $\eta$ is the inverse of $\varepsilon$ we obtain 

\begin{equation}\label{uh}
\begin{array}{lll}
\tau_x(C'_{\underline{t}})&=&  \tau_x \circ \eta(\langle B_{\underline{t}}\rangle) \\
  &=&\underline{\overline{\mathrm{rk}}}\mathrm{Hom}(B_{\underline{t}},R_x).
\end{array}
\end{equation}

So we have \begin{equation}\label{grados}d(\mathbb{L}_{\underline{t}}(x))=\underline{\overline{\mathrm{rk}}}\mathrm{Hom}(B_{\underline{t}},R_x),\end{equation}
and in particular \begin{equation}\label{rk}d(\mathbb{L}_{\underline{t}}(\mathrm{id}))=\underline{\overline{\mathrm{rk}}}\mathrm{Hom}(B_{\underline{t}},R)\end{equation} as Theorem \ref{LLB} asserts. This formula says that the double leaves basis has the correct degrees. We will prove that the double leaves basis for $\mathrm{Hom}(B_{\underline{s}}, B_{\underline{r}})$ as defined in \ref{LL} also has the correct degrees, i.e. we will prove the formula
\begin{equation}\label{cro}d(\mathbb{L}_{\underline{r}}^a \cdot \mathbb{L}_{\underline{s}})=\underline{\overline{\mathrm{rk}}}\mathrm{Hom}(B_{\underline{s}},B_{\underline{r}}).\end{equation}


Let $\underline{r}^{\mathrm{op}}=(r_p,\ldots,r_2,r_1)$. Using repeatedly the adjunction isomorphism (\ref{a}) we obtain
\begin{equation}\label{g}\mathrm{Hom}(B_{\underline{s}},B_{\underline{r}})\cong \mathrm{Hom}(B_{\underline{s}}B_{\underline{r}^{\mathrm{op}}},R).\end{equation}
Using equations  (\ref{as}), (\ref{rk}) and (\ref{g}) we obtain

$$\label{gh}\underline{\overline{\mathrm{rk}}}\mathrm{Hom}(B_{\underline{s}},B_{\underline{r}}) = \tau(C'_{\underline{s}}C'_{\underline{r}^{\mathrm{op}}}).$$

For any sequence $\underline{u}$ of elements of $\mathcal{S}$ and every $y\in W$ we define the polynomials $p_y^{\underline{u}}$ by the formulas

$$C'_{\underline{u}}=\sum_yp_y^{\underline{u}}\widetilde{T}_y$$

It is easy to check that $p_y^{\underline{u}^{\mathrm{op}}}=p^{\underline{u}}_{y^{-1}}.$ Using the following equation  (see \cite[proposition 8.1.1]{GP})

\begin{equation} \tau(\widetilde{T}_x\widetilde{T}_{y^{-1}})=  \delta_{x,y}\end{equation}
 
 we conclude that 
 $$\tau(C'_{\underline{s}}C'_{\underline{r}^{\mathrm{op}}})=\sum_{x\in W}p_x^{\underline{s}}p_x^{\underline{r}}.$$

 It is easy to see that the degrees of the generating morphisms $f_{sr}, j_s, m_s$ are equal to the degrees of their adjoints, so we conclude that the degree of any morphism between Bott-Samelson bimodules is equal to the degree of its adjoint, so $$d(\mathbb{L}_{\underline{r}}(x)^a)=p_x^{\underline{r}}.$$ We thus obtain $$d(\mathbb{L}_{\underline{r}}(x)^a\cdot \mathbb{L}_{\underline{s}}(x))=p_x^{\underline{s}}p_x^{\underline{r}}$$ 
and finally
\begin{displaymath}
\begin{array}{lll}
d(\mathbb{L}_{\underline{r}}^a\cdot \mathbb{L}_{\underline{s}})&=&\sum_{x\in W}d(\mathbb{L}_{\underline{r}}(x)^a\cdot \mathbb{L}_{\underline{s}}(x)) \\
&=& \sum_{x\in W}p_x^{\underline{s}}p_x^{\underline{r}}.
\end{array}
\end{displaymath}
Thus proving formula (\ref{cro}).

\subsection{An important Lemma}\label{T} Let us suppose that the elements of $\mathbb{L}_{\underline{r}}^a\cdot \mathbb{L}_{\underline{s}}$ are linearly independent for the left action of $R$. Let $M$ be the sub $R$-module of $\mathrm{Hom}(B_{\underline{s}},
B_{\underline{r}})$  generated by the elements of $\mathbb{L}_{\underline{r}}^a\cdot \mathbb{L}_{\underline{s}}$. In every degree the modules $M$ and $\mathrm{Hom}(B_{\underline{s}},
B_{\underline{r}})$ are finite dimensional $k$-vector spaces, and they have the same dimension as we saw in Section \ref{gd}, so they are equal. This being true in every degree we deduce that   $M=\mathrm{Hom}(B_{\underline{s}},
B_{\underline{r}})$, and this would complete the proof of Theorem \ref{LL}. 

So we only need to prove that the elements of $\mathbb{L}_{\underline{r}}^a\cdot \mathbb{L}_{\underline{s}}$ are linearly independent for the left action of $R$.
Before we can do this we have to introduce an order on $\mathbb{L}_{\underline{r}}^a\cdot \mathbb{L}_{\underline{s}}$. The linear independence will follow from a triangularity condition with respect to this order. 

Recall that $n$ is the lenght of $\underline{s}$. To every element $l\in \mathbb{L}_{\underline{s}}$ we associate two elements $\underline{i}=(i_1,\ldots, i_n)$ and $ \underline{j}=(j_1,\ldots, j_n)$ of $\{0,1\}^n$ in the following way. By construction $l$ is a composition of $n$ morphisms, say $l=l_n\circ \cdots \circ l_1.$
For $1\leq k\leq n$ we put $i_k=1$ if in $l_k$ (that is a composition of morphisms of the type $m_s, j_s$ and/or $f_{sr}$) appears a morphism of the type $m_s$ and we put $i_k=0$ otherwise. We put $j_k=1$ if in $l_k$  appears a morphism of the type $j_s$ and we put $j_k=0$ otherwise. 

On the other hand, the leaf $l$ is completely determined by $\underline{s},$ $\underline{i}$ and $ \underline{j},$ (moreover, it is completely determined by $\underline{s}$ and $ \underline{i},$) so we denote $l=f^{\underline{j}}_{\underline{i}}.$ The following lemma is key for defining the mentioned order.




\begin{lem}\label{IJ}
Let  $(\underline{i}, \underline{i}')\in \{0,1\}^n\times \{0,1\}^n$ with $\underline{i}\neq \underline{i}'$. If  $f^{\underline{j}}_{\underline{i}}\in \mathbb{L}_{\underline{s}}(x)$ and  $f^{\underline{j}}_{\underline{i}'}\in \mathbb{L}_{\underline{s}}(x')$, then $x\neq x'.$
\end{lem}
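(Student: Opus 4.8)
The plan is to argue by induction on $n$, the length of $\underline{s}$, tracking how the pair $(\underline{i},\underline{i}')$ first disagrees and showing this forces a divergence in the Bruhat-theoretic ``target'' of the two light leaves. Recall from the construction of $\mathbb{T}_{\underline{s}}$ that a leaf $f^{\underline{j}}_{\underline{i}}$ is built step by step: at step $k$ one is sitting at a node colored by $(B_{t_1}\cdots B_{t_i})(B_{s_k}\cdots B_{s_n})$ for some word $\underline{t}$, and the value of $i_k$ records whether one takes the ``$m_{s_k}$'' branch (which kills the letter $s_k$ on the left piece) or the ``$\mathrm{id}$'' branch (which keeps it). The key observation is that after step $k$ the element $t_1\cdots t_i\in W$ represented by the left Bott--Samelson word depends only on $\underline{s}$ and on the truncation $(i_1,\ldots,i_k)$ — this is exactly the ``subword'' combinatorics underlying the light leaves, and it is a standard fact (implicit in \cite{Li1}) that this element equals the Demazure product of the subword of $(s_1,\ldots,s_k)$ selected by the zeros of $(i_1,\ldots,i_k)$, or more precisely is governed by the Bruhat stroll determined by $(i_1,\ldots,i_k)$. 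In any case, what matters is: the element of $W$ reached after step $k$ is a function of $(i_1,\ldots,i_k)$ alone.

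First I would make this precise by defining, for each $k$, the element $x_k(\underline{i})\in W$ reached by the leaf $f^{\underline{j}}_{\underline{i}}$ after $k$ steps, and proving by induction on $k$ the transition rule: if $l(x_{k-1}s_k)>l(x_{k-1})$ then $i_k=0$ forces $x_k=x_{k-1}s_k$ while $i_k=1$ forces $x_k=x_{k-1}$; if $l(x_{k-1}s_k)<l(x_{k-1})$ then $i_k=0$ forces $x_k=x_{k-1}$ while $i_k=1$ forces $x_k=x_{k-1}s_k$. In all four cases, $x_k$ is determined by $x_{k-1}$, by $s_k$, and by $i_k$. Then, letting $m$ be the smallest index with $i_m\neq i'_m$, the two leaves agree through step $m-1$, so $x_{m-1}(\underline{i})=x_{m-1}(\underline{i}')=:z$; and at step $m$ the two prescriptions for $i_m$ applied to the \emph{same} $z$ and the \emph{same} $s_m$ yield $x_m(\underline{i})\neq x_m(\underline{i}')$, because in each of the two regimes ($l(zs_m)$ bigger or smaller) the two choices of $i_m\in\{0,1\}$ send $z$ to the two distinct elements $z$ and $zs_m$.

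The final step is to propagate the inequality $x_m(\underline{i})\neq x_m(\underline{i}')$ forward to $x_n(\underline{i})=x\neq x'=x_n(\underline{i}')$, using the two further leaves $f^{\underline{j}}_{\underline{i}}$ restricted to the tail $(s_{m+1},\ldots,s_n)$ — i.e.\ I would apply the inductive hypothesis (Lemma in length $n-m$) to the two sub-light-leaves starting from the distinct elements reached at step $m$. Here one must be slightly careful: the inductive statement as phrased compares two leaves of the \emph{same} Bott--Samelson word with different $\underline{i}$, whereas now the two tails start from different left words. So I would instead prove, as the genuine inductive statement, the sharper claim that $x_n$ is a function of $(x_m, \, s_{m+1},\ldots,s_n, \, i_{m+1},\ldots,i_n)$ via iterated application of the same transition rule, and then observe that two \emph{different} starting elements can be made to collide only if the subsequent strolls happen to merge — which is precisely what the transition rule forbids, since at every step the map $x_{k-1}\mapsto x_k$ (for fixed $s_k$ and fixed $i_k$) is \emph{injective}: from the four cases above, $z\mapsto z$ and $z\mapsto zs_k$ are both injective on $W$. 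Injectivity of each one-step map, composed, gives injectivity of $x_m\mapsto x_n$ once $\underline{i}$ and $\underline{i}'$ agree past step $m$; combined with $x_m(\underline{i})\neq x_m(\underline{i}')$ this yields $x\neq x'$.

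\textbf{Main obstacle.} The delicate point is not the final injectivity propagation (which is formal once stated correctly) but pinning down the transition rule for $x_k$ against the actual construction of $\mathbb{T}_{\underline{s}}$ in Section~\ref{Ts}: in case (2) of that construction there is a braid-move morphism $F_k(\underline{t})$ and a $j_{s_k}$ applied before the $m_{s_k}$/$\mathrm{id}$ split, so one has to check that these do not alter the element of $W$ represented by the left word (the braid move doesn't change $t_1\cdots t_i$, and the $j_{s_k}$ merely reorganizes $B_{s_k}B_{s_k}\to B_{s_k}$ without changing the represented element either), and that the subsequent $m_{s_k}$ branch is what deletes the letter. I would verify this bookkeeping carefully, as it is the real content; everything else is induction.
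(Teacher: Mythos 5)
Your transition rule for the walk $x_{k-1}\mapsto x_k$ is correct, and locating the first index $m$ with $i_m\neq i'_m$ does give $x_m(\underline{i})\neq x_m(\underline{i}')$. The gap is in the propagation step: the one-step map is \emph{not} injective for fixed $s_k$ and fixed $i_k$. Which of the two assignments $z\mapsto z$ or $z\mapsto zs_k$ applies depends on $z$ itself (via the comparison of $l(z)$ and $l(zs_k)$), so the global map is the fold $z\mapsto \max\{z,zs_k\}$ (for $i_k=0$) or $z\mapsto \min\{z,zs_k\}$ (for $i_k=1$), which is two-to-one: $z$ and $zs_k$ always have the same image. Concretely, for $\underline{s}=(s,s)$ the leaves with $\underline{i}=(0,0)$ and $\underline{i}'=(1,0)$ diverge after step one ($s$ versus $e$) and yet both end at $x=x'=s$. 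This also exposes a structural problem: your argument never uses the hypothesis that the two leaves carry the \emph{same} $\underline{j}$, and without that hypothesis the statement of Lemma \ref{IJ} is simply false (the example just given has $\underline{j}=(0,1)$ versus $\underline{j}'=(0,0)$). Since $\underline{j}$ is not determined by $\underline{i}$ once the two walks have separated, you cannot treat it as redundant data after step $m$.

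The repair is exactly to feed in $j_k=j'_k$ for $k>m$: this says that at each such step both walks are in the same length regime ($l(y_{k-1}s_k)>l(y_{k-1})$ if and only if $l(y'_{k-1}s_k)>l(y'_{k-1})$), so the \emph{same} one of the two genuinely injective maps $z\mapsto z$, $z\mapsto zs_k$ is applied to both, and the inequality $y_k\neq y'_k$ propagates to $y_n\neq y'_n$. With that insertion your forward induction works and is essentially the mirror image of the paper's argument, which instead assumes $x=x'$ and runs a descending induction ($y_p=y'_p\Rightarrow y_{p-1}=y'_{p-1}$, using the same shared-regime observation) to force $\underline{i}=\underline{i}'$, a contradiction. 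Your bookkeeping remarks about case (2) of the construction (the braid move $F_k(\underline{t})$ and $j_{s_k}$ not changing the represented element) are fine and are implicitly used in the paper as well.
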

\begin{proof}

Let us prove this lemma by contradiction. We will suppose that $x=x'$. Let $1\leq p\leq n$. If the node of depth $p-1$ corresponding to the leaf  $f^{\underline{j}}_{\underline{i}}$  is colored by the bimodule $(B_{t_1}  \cdots B_{t_{a}})(B_{s_{p}} \cdots B_{s_n}),$ then we define $$y_{p-1}=t_1t_2\cdots t_a\in W,$$
and $y_n=x$. In a similar way we define, for $1\leq p\leq n$,  the element $y'_{p-1}\in W$ associated to the leaf  $f^{\underline{j}}_{\underline{i}'}$ and $y'_n=x'.$ By construction we have
$$ l(y_{p-1}s_p)=l(y_{p-1})-1 \iff l(y'_{p-1}s_p)=l(y'_{p-1})-1. $$ 
We have supposed that $y_n=y'_n.$ We will prove by descendent induction that $y_p=y'_p$ for all $1\leq p\leq n$ and this will imply the equality $\underline{i}= \underline{i}',$ yielding the contradiction we are looking for.

Let $1\leq p\leq n$. Let us suppose that $y_p=y'_p$, we will prove by contradiction that $y_{p-1}=y'_{p-1},$ so we will suppose $y_{p-1}\neq y'_{p-1}.$ By construction of $\mathbb{T}_{\underline{s}}$ we know that $y_p=y_{p-1}$ or $y_p=y_{p-1}s_p,$  then we can suppose without loss of generality that $y_p=y_{p-1}s_p$ and $y'_{p}=y'_{p-1}.$ Let us consider the two possible cases:

\begin{itemize}
\item \textbf{Case 1}: $l(y_{p-1}s_p)=l(y_{p-1})-1. $ As we have seen, this implies  $ l(y'_{p-1}s_p)=l(y'_{p-1})-1.$ But this yields to a contradiction:

\begin{displaymath}
\begin{array}{lll}
l(y_{p-1})-1&=&l(y_p)   \\
  &=&l(y'_{p-1})\\
  &=& l(y'_{p-1}s_p)+1\\
 &=& l(y_ps_p)+1\\
&=&l(y_{p-1})+1 .
\end{array}
\end{displaymath}
\item \textbf{Case 2}: $l(y_{p-1}s_p)=l(y_{p-1})+1. $ This implies  $ l(y'_{p-1}s_p)=l(y'_{p-1})+1.$ This also yields to a contradiction:

\begin{displaymath}
\begin{array}{lll}
l(y_{p-1})&=&l(y_ps_p)   \\
  &=&l(y'_{p-1}s_p)\\
  &=& l(y'_{p-1})+1\\
 &=& l(y_p)+1\\
&=&l(y_{p-1}s_p)+1\\ 
&=&l(y_{p-1})+2.
\end{array}
\end{displaymath}
\end{itemize}
This finishes the proof of Lemma \ref{IJ}.
\end{proof}

\subsection{The order} Consider the element $(f^{\underline{l}}_{\underline{i}})^a\cdot f^{\underline{j}}_{\underline{k}}\in \mathbb{L}_{\underline{r}}(x)^a\cdot \mathbb{L}_{\underline{s}}(x)$. Recall that the sequences $\underline{r}$ and $\underline{s}$ have already been fixed.  As the couple $(\underline{i},\underline{r})$ determine $\underline{l}$ and $x$ and the  triplet $(\underline{j},\underline{s},x)$ determine $\underline{k}$ by Lemma \ref{IJ}, we conclude that the morphism $(f^{\underline{l}}_{\underline{i}})^a\cdot f^{\underline{j}}_{\underline{k}}$ is completely determined by $(\underline{i},\underline{j}).$ We will call it $f_{\underline{i}}f^{\underline{j}}.$

 Now we can introduce a total order $\trianglelefteq$ in $\{0,1\}^{n}\times \{0,1\}^n.$ The symbol $\leq$ will be used for the lexicographical order in $\{0,1\}^{n}.$  Let $\underline{i}=(i_1,\ldots i_n), \underline{i}'=(i'_1,\ldots i'_n), \underline{j}=(j_1,\ldots j_n)$ and $ \underline{j}'=(j'_1,\ldots j'_n)$. The order is defined as follows.
 \begin{itemize}
    \item if $\underline{j}<\underline{j}' $ then $ (\underline{i},\underline{j})\vartriangleleft (\underline{i}',\underline{j}')$
    \item if $\underline{j}=\underline{j}'$ then 
    \begin{itemize}
    \item  if $\sum_{p=1}^ni_p<\sum_{p=1}^ni'_p$ then $ (\underline{i},\underline{j})\vartriangleleft (\underline{i}',\underline{j}')$
    \item if $\sum_{p=1}^ni_p=\sum_{p=1}^ni'_p$ and $\underline{i}<\underline{i}' $ then $ (\underline{i},\underline{j})\vartriangleleft (\underline{i}',\underline{j}')$
    \end{itemize}
\end{itemize}
 
 \begin{remark}
In the notation of \cite{Li1}, if  $\underline{j}=\underline{j}'$, then $ (\underline{i},\underline{j})\trianglelefteq (\underline{i}',\underline{j}')$ if and only if $ \underline{i}\preceq \underline{i'}.$
\end{remark} 
 
 This order in $\{0,1\}^{n}\times \{0,1\}^n$ induces an order  in  $\mathbb{L}_{\underline{r}}^a\cdot \mathbb{L}_{\underline{s}}$:   $$ f_{\underline{i}}f^{\underline{j}} \trianglelefteq f_{\underline{i}'}f^{\underline{j}'}\Longleftrightarrow (\underline{i},\underline{j}) \trianglelefteq (\underline{i}',\underline{j}').$$

In the next paragraphs we will see why is this order particularly interesting for the double leaves basis.

\subsection{Linear independence}\label{opp}\label{li}
 We recall that if $\underline{t}=(t_1,\ldots, t_a)\in \mathcal{S}^a,$  then the opposite sequence is $\underline{t}^{\mathrm{op}}=(t_a,\ldots, t_1)\in \mathcal{S}^a.$
Let us define $\alpha_s=p_s\circ \epsilon_s$. The category $\mathcal{B}$ of Soergel bimodules is generated as tensor category by the morphisms in the set $$\{m_s,j_s, \alpha_s, f_{sr}\}_{s\neq r\in \mathcal{S}}$$ because of Theorem \ref{LLB} and adjunction (\ref{a}). So to any morphism between Bott-Samelson bimodules $f\in \mathrm{Hom}(B_{\underline{t}},B_{\underline{q}})$ there is an  associated morphism that we denote $f^{\mathrm{op}}\in \mathrm{Hom}(B_{\underline{t}^{\mathrm{op}}},B_{\underline{q}^{\mathrm{op}}})$. Moreover, this gives  a bijection between the spaces $\mathrm{Hom}(B_{\underline{t}},B_{\underline{q}})$ and $\mathrm{Hom}(B_{\underline{t}^{\mathrm{op}}},B_{\underline{q}^{\mathrm{op}}})$.

Recall that in the construction of the tree $\mathbb{T}_{\underline{s}}$ we fixed for every element $x\in W$ a reduced expression $\underline{x}=(x_1,\ldots, x_a)$ of $x$.  We define the following morphism in $\mathrm{Hom}(B_{\underline{x}}B_{\underline{x}^{\mathrm{op}}},R):$ $$\rho_{x}=(m_{x_1}\circ j_{x_1})\circ (\mathrm{id}^1\otimes (m_{x_2}\circ j_{x_2})\otimes \mathrm{id}^1)\circ\cdots\circ(\mathrm{id}^{a-1}\otimes (m_{x_a}\circ j_{x_a})\otimes \mathrm{id}^{a-1}).$$

 In the isomorphism (\ref{g}) the subset $\mathbb{L}_{\underline{r}}^a\cdot \mathbb{L}_{\underline{s}}\subseteq \mathrm{Hom}(B_{\underline{s}}, B_{\underline{r}})$ corresponds to   $$\mathbb{L}_{\underline{s}}\bullet  \mathbb{L}_{\underline{r}}^{\mathrm{op}} \subseteq \mathrm{Hom}(B_{\underline{s}}B_{\underline{r}^{\mathrm{op}}},R).$$
where $\bullet$ is defined as follows.  If $f\in\mathbb{L}_{\underline{s}}(x)$ and $g \in \mathbb{L}_{\underline{r}}(y)$,  then 
$$f\bullet g^{\mathrm{op}}=
\begin{cases}
\hspace*{1.15cm} 0\hspace*{1.2cm} \mathrm{if}\ x\neq y\\ 
\rho_{x} \circ (f\otimes g^{\mathrm{op}}) \ \ \ \mathrm{if}\ x=y
\end{cases}$$

So it is enough to prove that $ \mathbb{L}_{\underline{s}}\bullet \mathbb{L}_{\underline{r}}^{\mathrm{op}}$ is a linearly independent set.  For any sequence $(u_1, \ldots, u_p)\in\mathcal{S}^p$, let us denote  $$1^{\otimes}=1\otimes 1\otimes \cdots\otimes 1\in B_{u_1}\cdots B_{u_p}$$ and $$x^{\otimes}=1\otimes x_{u_1}\otimes \cdots\otimes x_{u_p}\in B_{u_1}\cdots B_{u_p}.$$
Let us recall that $R_+$ is the ideal of $R$ generated by the homogeneous elements of non zero degree. An element in $B_{u_1}\cdots B_{u_p}$ is called superior if it belongs to the set $R_+B_{u_1}\cdots B_{u_p}$ and it is called normalsup if it belongs to the set $x^{\otimes}+R_+B_{u_1}\cdots B_{u_p}.$ For $s\in\mathcal{S}$,  let us denote $x_s^0=1$ and $x_s^1=x_s.$ If $\underline{j}=(j_1,\ldots, j_n)\in \{0,1\}^n$, we put $$x^{\underline{j}}=x_{s_1}^{j_1}\otimes\cdots\otimes x_{s_n}^{j_n}\otimes 1\in B_{s_1}\cdots B_{s_n}.$$
By construction of $f^{\underline{j}}_{\underline{k}}\in \mathbb{L}_{\underline{s}}$ we have that $f^{\underline{j}}_{\underline{k}}(x^{\underline{j}})=1^{\otimes}$ and $f^{\underline{j}}_{\underline{k}}(x^{\underline{j}'})=0$ if $\underline{j}'<\underline{j}.$ We remark that this is independent of $\underline{k}$. On the other hand,  if $$x_{\underline{i}}=1\otimes x_{r_1}^{i_1}\otimes\cdots\otimes x_{r_p}^{i_p}\in B_{r_1}\cdots B_{r_p},$$
then $f_{\underline{i}}^{\underline{k}}(x_{\underline{i}})$ is normalsup \cite[Lemme 5.10]{Li1} and $f_{\underline{i}}^{\underline{k}}(x_{\underline{i}'})$ is a superior element if $\underline{i}'\prec \underline{i}.$ Again, this is independent of $\underline{k}.$

Let us define $$x_{\underline{i}}^{\mathrm{op}}=x_{r_p}^{i_p}\otimes\cdots \otimes x_{r_1}^{i_1}\otimes 1\in B_{r_p}\cdots B_{r_1}.$$
 If $f_{\underline{i}}f^{\underline{j}}\in \mathbb{L}_{\underline{r}}^a\cdot \mathbb{L}_{\underline{s}}$, by explicit calculation (using the fact that $f_{sr}(1^{\otimes})=1^{\otimes}$ and some simple degree arguments) we obtain the formula
\begin{equation}\label{uni}f^{\underline{j}}\bullet f^{\mathrm{op}}_{\underline{i}}(x^{\underline{j}'}\otimes x^{\mathrm{op}}_{\underline{i}'})=  \begin{cases} 1
 \text{ if } (\underline{i}',\underline{j}')= (\underline{i},\underline{j}) \\
 0 \text{ if } (\underline{i}',\underline{j}')\vartriangleleft (\underline{i},\underline{j})
 \end{cases}\end{equation}
 
 This unitriangularity formula proves that $ \mathbb{L}_{\underline{s}}\bullet \mathbb{L}_{\underline{r}}^{\mathrm{op}}$ is a linearly independent set and thus we finish the proof of Theorem \ref{LL}. $\hfill \Box$

\begin{cor}\label{p} The set $\mathbb{L}_{\underline{r}}^a\cdot p\mathbb{L}_{\underline{s}}$  is  a basis of  $\mathrm{Hom}(B_{\underline{s}}, B_{\underline{r}}).$
\end{cor}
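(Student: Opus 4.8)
The plan is to deduce this from Theorem \ref{LL} together with the fact that the favorite projectors $p_{\underline{t}}$ are idempotents whose image is the indecomposable $B_y$ (for $\underline{t}$ a reduced expression of $y$). The key observation is that passing from $\mathbb{L}_{\underline{r}}^a\cdot \mathbb{L}_{\underline{s}}$ to $\mathbb{L}_{\underline{r}}^a\cdot p\mathbb{L}_{\underline{s}}$ only changes each leaf $l\in\mathbb{L}_{\underline{s}}(x)$ to $pl=p_{\underline{x}}\circ l$, i.e.\ post-composes with an idempotent endomorphism of $B_{\underline{x}}$ depending only on $x$. Since $p_{\underline{x}}$ is idempotent and $\mathrm{Im}(p_{\underline{x}})\cong B_x$ is indecomposable, it splits the identity of $B_{\underline{x}}$ as $p_{\underline{x}}\oplus(\mathrm{id}-p_{\underline{x}})$, and the whole argument amounts to showing that the "correction terms" $(\mathrm{id}-p_{\underline{x}})\circ l$ can be absorbed unitriangularly.

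First I would fix the order $\trianglelefteq$ on $\mathbb{L}_{\underline{r}}^a\cdot \mathbb{L}_{\underline{s}}$ already introduced in Section \ref{li}, and transport it to $\mathbb{L}_{\underline{r}}^a\cdot p\mathbb{L}_{\underline{s}}$ via the obvious index-preserving bijection $f_{\underline{i}}f^{\underline{j}}\mapsto f_{\underline{i}}^a\cdot pf^{\underline{j}}$. Next, I would establish the analogue of the unitriangularity formula (\ref{uni}): evaluating $f^{\underline{j}}\bullet (f^{\mathrm{op}}_{\underline{i}})$ on the vectors $x^{\underline{j}'}\otimes x^{\mathrm{op}}_{\underline{i}'}$ gives a unitriangular matrix. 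For the $p$-version, one computes $(pf^{\underline{j}})$ evaluated on $x^{\underline{j}'}$; since $p_{\underline{x}}$ fixes the bottom-degree element $1^{\otimes}\in B_{\underline{x}}$ (the image of the unique degree-zero-up-to-shift generator, because $p_{\underline{x}}$ has image $B_x$ with $B_x^x\cong R_x(l(x))$ and the projector is normalized to be the identity there), the leading term $f^{\underline{j}}(x^{\underline{j}})=1^{\otimes}$ is preserved, while the lower-order terms (which lie in $R_+ B_{\underline{x}}$ or involve strictly smaller multi-indices) are only perturbed within the strictly-lower part of the filtration. The same applies to the superior/normalsup analysis on the $\underline{r}$-side. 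Combining, one gets exactly formula (\ref{uni}) again with $f^{\underline{j}}$ replaced by $pf^{\underline{j}}$, proving linear independence of $\mathbb{L}_{\underline{r}}^a\cdot p\mathbb{L}_{\underline{s}}$.

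Finally, linear independence plus the graded-degree count closes the argument: the set $p\mathbb{L}_{\underline{s}}$ has the same multiset of degrees as $\mathbb{L}_{\underline{s}}$ (post-composition with a degree-zero idempotent does not change degrees), so $d(\mathbb{L}_{\underline{r}}^a\cdot p\mathbb{L}_{\underline{s}}) = d(\mathbb{L}_{\underline{r}}^a\cdot \mathbb{L}_{\underline{s}}) = \underline{\overline{\mathrm{rk}}}\,\mathrm{Hom}(B_{\underline{s}},B_{\underline{r}})$ by (\ref{cro}); then the dimension-counting argument of Section \ref{T} upgrades linear independence to a basis. I expect the main obstacle to be the careful verification that $p_{\underline{x}}$ acts as the identity on the relevant normalsup/bottom-degree vectors — i.e.\ pinning down the normalization of the favorite projector well enough that the leading entries of the evaluation matrix are genuinely $1$ and not merely nonzero. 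If the normalization turns out to introduce nonzero scalars rather than $1$, the matrix is still triangular with nonzero diagonal, which already suffices for linear independence, so the conclusion is robust even if that point needs a slightly weaker statement.
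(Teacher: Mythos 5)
Your proposal is correct and follows essentially the same route as the paper: the graded degrees are unchanged because each $p_{\underline{t}}$ has degree zero, the projector fixes the lowest-degree element $1^{\otimes}$ (the paper deduces this from degree reasons and Proposition \ref{grande} (3), matching your normalization remark), and the resulting unitriangularity formula analogous to (\ref{uni}) plus the rank count of Section \ref{ac} yields the basis statement.
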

\begin{proof}
The graded degrees of $\mathbb{L}_{\underline{r}}^a\cdot p\mathbb{L}_{\underline{s}}$ are the same as the graded degrees of $\mathbb{L}_{\underline{r}}^a\cdot \mathbb{L}_{\underline{s}}$ because $p_{\underline{t}}$ is a degree zero morphism for every $\underline{t}$ reduced expression of an element of $W$.

By degree reasons, using Proposition \ref{grande} (3) we can see that the element $1^{\otimes}\in B_{\underline{t}}$ is in the image of  $p_{\underline{t}}$. So we have a unitriangularity  formula analogous to (\ref{uni}) that gives us the linear independence. 
\end{proof}

\section{Standard Leaves basis}\label{SLB}

In the rest of  this section we prove a result that is not needed for the rest of the paper (the reader only interested in Lusztig's Conjecture can skip this section), but it is a simple corollary of Lemma \ref{IJ}. We expect that it will help in  the understanding of Kazhdan-Lusztig polynomials because it gives an important step in the calculation of  the character of any Soergel bimodule that is given as the image of a projector of a Bott-Samelson in the spirit of \cite{Li4} or \cite{El} (see the formula for $\eta$ in section \ref{Some notations}).

To generalize Theorem \ref{LLB} we need to introduce a new morphism. Consider the $(R,R)-$bimodule morphism $\beta: B_s \rightarrow R_s$ defined by $\beta(p\otimes q)=ps(q)$ for $p,q\in R.$ If $x,y\in W$ we have $R_xR_y\cong R_{xy}$. If $\underline{x}=(x_1,\ldots ,x_a),$  we can define an $(R,R)-$bimodule morphism that we also denote $\beta:B_{\underline{x}}\rightarrow R_x.$

For a sequence $\underline{s}$ of elements in $\mathcal{S}$ we define the set $$\mathbb{L}_{\underline{s}}^{\beta}=\{\beta\circ l \,\vert\, l\in \mathbb{L}_{\underline{s}}\}\subseteq \coprod_{x\in W}\mathrm{Hom}(B_{\underline{s}},R_x). $$
We define $\mathbb{L}_{\underline{s}}^{\beta}(x)$ as the subset of $\mathbb{L}_{\underline{s}}^{\beta}$ consisting of the elements belonging to $\mathrm{Hom}(B_{\underline{s}},R_x)$. We call the set $\mathbb{L}_{\underline{s}}^{\beta}$ the \emph{Standard leaves basis}, and the following generalization of Theorem \ref{LLB} explains this name.

\begin{prop}\label{Lb}
The set $\mathbb{L}_{\underline{s}}^{\beta}(x)$ is a basis of $\mathrm{Hom}(B_{\underline{s}},R_x)$ as a right $R$-module.
\end{prop}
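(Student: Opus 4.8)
The plan is to reduce Proposition \ref{Lb} to Theorem \ref{LLB} via the adjunction isomorphism (\ref{a}) and the description of $R_x$. First I would observe that $R_x$ is invertible as a bimodule: since $R_x \cong R$ as a left module with right action twisted by $x$, one has $R_x R_{x^{-1}} \cong R$, and tensoring on the right by $R_{x^{-1}}$ is an equivalence of categories with inverse $\otimes_R R_x$. Applying this, $\mathrm{Hom}(B_{\underline{s}}, R_x) \cong \mathrm{Hom}(B_{\underline{s}} R_{x^{-1}}, R)$ as right $R$-modules. The morphism $\beta\colon B_{\underline{x}} \to R_x$ restricts on the bottom of $\mathbb{T}_{\underline{s}}$ exactly to the piece that, after the adjunction, records which leaves of $\mathbb{T}_{\underline{s}}$ land on $B_{\underline{x}}$; so the subset $\mathbb{L}_{\underline{s}}^{\beta}(x)$ should match, under this identification, a set of light leaves for $B_{\underline{s}}R_{x^{-1}}$ onto $R$.

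The key steps, in order, are: (i) make precise the graded-rank count, i.e. show $\underline{\mathrm{rk}}\,\mathrm{Hom}(B_{\underline{s}},R_x) = d(\mathbb{L}_{\underline{s}}^{\beta}(x))$, which by formula (\ref{grados}) is $d(\mathbb{L}_{\underline{s}}(x))$ — this holds because $\beta$ is a homogeneous morphism of a fixed degree (the same degree for every leaf landing on $B_{\underline{x}}$, since $\beta$ is built uniformly from the maps $m_{x_i}\circ j_{x_i}$-type ingredients on $B_{\underline{x}}$), so $d(\mathbb{L}_{\underline{s}}^{\beta}(x))$ is a fixed power of $v$ times $d(\mathbb{L}_{\underline{s}}(x))$, and one checks this power matches the shift between $\mathrm{Hom}(B_{\underline{s}},R_x)$ and $\mathrm{Hom}(B_{\underline{s}},R)$ coming from (\ref{uh}); (ii) establish linear independence over $R$ (on the right) of $\mathbb{L}_{\underline{s}}^{\beta}(x)$. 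For (ii) I would mimic the unitriangularity argument of Section \ref{li}: using Lemma \ref{IJ}, a leaf of $\mathbb{T}_{\underline{s}}$ landing on $B_{\underline{x}}$ is determined by $\underline{i}$ once $x$ is fixed, and composing with $\beta$ does not destroy the evaluation behaviour on the elements $x^{\underline{j}}$ (since $\beta(1^{\otimes})$ and $\beta(x^{\underline{j}})$ are controlled, and $\beta$ is surjective). Concretely, evaluating $\beta\circ f^{\underline j}_{\underline k}$ on the family $x^{\underline{j}'}$ gives a unitriangular matrix over $R$ exactly as in (\ref{uni}), which forces right $R$-linear independence. Then (i) and (ii) together, via the degree-by-degree dimension comparison used in Section \ref{T}, yield that $\mathbb{L}_{\underline{s}}^{\beta}(x)$ spans, hence is a basis.

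The main obstacle I expect is step (ii): unlike in Section \ref{li}, where one pairs $\mathbb{L}_{\underline{s}}$ against $\mathbb{L}_{\underline{r}}^{\mathrm{op}}$ and gets a square unitriangular system, here one must argue linear independence of $\mathbb{L}_{\underline{s}}^{\beta}(x)$ intrinsically over $R$, so one needs a good supply of test elements of $B_{\underline{s}}$ on which the leaves $\beta \circ f^{\underline j}_{\underline k}$ behave triangularly with respect to the order on $\{0,1\}^n$. The point is that $\beta$ only collapses $B_{\underline x}$ to $R_x$ by applying $s(q)$-type twists, so $\beta \circ l$ evaluated on $x^{\underline j}$ still equals (a twist of) $l$ evaluated on $x^{\underline j}$, i.e. a normalsup or superior element in the sense of Section \ref{li}; the unitriangularity then follows from \cite[Lemme 5.10]{Li1} just as before. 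Verifying carefully that $\beta$ preserves the normalsup/superior dichotomy — i.e. that it sends $R_+ B_{\underline x}$ into $R_+ R_x$ and the normalsup class to a generator of $R_x/R_+R_x$ — is the one technical point that needs to be spelled out, but it is a direct computation from the formula $\beta(p\otimes q) = p\,s(q)$.
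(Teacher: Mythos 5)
Your proposal is correct and takes essentially the same route as the paper: the graded-rank identity (\ref{grados}) together with the degree-by-degree comparison used in the proof of Theorem \ref{LL}, and then right $R$-linear independence from the unitriangular evaluation of the maps $\beta\circ f^{\underline{j}}_{\underline{i}}$ on the test elements $x^{\underline{j}'}$, with Lemma \ref{IJ} guaranteeing that leaves landing on a fixed $B_{\underline{x}}$ are distinguished by $\underline{j}$. Your opening reduction via tensoring with $R_{x^{-1}}$ and the normalsup/superior verification are not needed, since $f^{\underline{j}}_{\underline{i}}(x^{\underline{j}})=1^{\otimes}$ and $f^{\underline{j}}_{\underline{i}}(x^{\underline{j}'})=0$ for $\underline{j}'<\underline{j}$ already yield the triangularity after composing with $\beta$.
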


\begin{proof}
Because of the  equation (\ref{grados}) and using a similar reasoning as in Section \ref{T} we only need to prove that the elements of $\mathbb{L}_{\underline{s}}^{\beta}(x)$ are linearly independent with respect to the right $R$-action.

From section \ref{li} we know that $\beta\circ f^{\underline{j}}_{\underline{i}}(x^{\underline{j}})=1$  and that   $\beta\circ f^{\underline{j}}_{\underline{i}}(x^{\underline{j}'})=0,$ if $\underline{j}$ is bigger than $\underline{j}'$ in the lexicographical order. Then Lemma \ref{IJ} allows us to conclude the proof by a triangularity argument.\end{proof}

\section{Soergel's affine $p$-conjecture is equivalent to Fiebig's conjecture}\label{eq1}

In this section $W$ is a Weyl group and $\widehat{W}$ its associated affine Weyl group.
Soergel proves  \cite{So3}  that Soergel's conjecture for Weyl groups is equivalent to a part of Lusztig's conjecture (weights near the Steinberg weight). We will prove in this section  that a version of Soergel's conjecture (that we called Soergel's afine $p$-conjecture) implies the full Lusztig conjecture.

 More precisely, it is in some sense implicit in the work of Fiebig that Soergel's affine $p$-conjecture  is equivalent to Fiebig's conjecture (we will explain in detail Soergel's affine $p$-conjecture, Fiebig's conjecture and their equivalence in the Section \ref{SF}). Fiebig proves \cite{Fi3} that Fiebig's conjecture implies Lusztig's conjecture.

\subsection{The category $\mathcal{P}^{\circ}$}\label{P0}
In the following we use the same notations as in sections 2.2 and 2.3 of \cite{Fi2}.
Let us suppose that $p$ is  bigger than the Coxeter number of $W$.  
Define $$\hCZ:= \mathcal{Z}(\widehat{W}^{\circ})$$

Let us define $\widehat{\mathcal{B}}^{\circ}_p$ as the full subcategory of $\mathcal{B}(\widehat{W}, \mathbb{F}_p)$ consisting of objects that are direct sums of objects in the set $\{B_w\}_{w\in \widehat{W}^{\circ}}.$

We define (following \cite[Proof of Theorem 4.3]{Fi3}) a functor $G:\widehat{\mathcal{B}}^{\circ}_p\rightarrow \hCZ$-mod by the rule 
$$
\widehat{\mathcal{B}}^{\circ}_p\ni M\mapsto G(M):= \left\{ (m_x)\in \bigoplus_{x\in \widehat{W}^{\circ}}M^x\left| \, 
\begin{matrix} 
\rho_{x,tx}(m_x)= \rho_{tx,x}(m_{tx})\\
 \text{ for all $t\in \mathcal{T}^{\circ},$ $x, tx\in \widehat{W}^{\circ}$}
\end{matrix}
\right.\right\}.
$$
The bimodule $M^x$ and the morphisms $\rho_{x,y}$ were defined in section \ref{ca}.
$G(B)$ is a $\hCZ$-module by pointwise multiplication. We define $\mathcal{P}^{\circ}$ as the essential image of $G$. For $\mathcal{M}\in \mathcal{P}^{\circ}$ define $\mathrm{supp}\mathcal{M}:=\{w\in \widehat{W}\,\vert \, \mathcal{M}^w\neq 0\}.$ 


\subsection{Fiebig's conjecture}\label{SF}
The following is Theorem 6.1 of \cite{Fi2}, that will be needed to state Fiebig's conjecture.
\begin{deth}\label{f}
For all $w\in\widehat{W}^{\circ}$ there exist an object $\mathcal{B}(w)\in \mathcal{P}^{\circ}$, unique up to isomorphism, with the following properties: 
\begin{enumerate}
\item $\mathcal{B}(w)$ is indecomposable  in $\mathcal{P}^{\circ}$.
\item $\mathrm{supp}\mathcal{B}(w)\subseteq \  \leq w$ and $\mathcal{B}(w)^w\cong R(l(w))$
\end{enumerate}
\end{deth}

We note that  we call $\mathcal{B}(w)$ what is called $\mathcal{B}(w)(l(w))$ in \cite{Fi5}. Let us recall Fiebig's conjecture.

\begin{conj}[Fiebig] $C'_w=\sum_{x}\underline{\mathrm{rk}}\mathcal{B}(w)^xT_x\ \  for  \  all\ w\in \widehat{W}^{\circ}.$
\end{conj}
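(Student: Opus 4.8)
The plan is to read Fiebig's identity off Soergel's character formula, once the indecomposable Soergel bimodule $B_w$ has been transported to Fiebig's category $\mathcal{P}^{\circ}$. Since Section \ref{eq1} identifies this conjecture with affine Soergel's $p$-conjecture, the proof has exactly that shape: fix $w\in\widehat{W}^{\circ}$ and let $B_w\in\widehat{\mathcal{B}}^{\circ}_p$ be the indecomposable Soergel bimodule attached to $w$; by Proposition \ref{grande}(5) it exists unconditionally, with $\mathrm{supp}\,B_w\subseteq\ \leq w$ and $B_w^w\cong R_w(l(w))$. Affine Soergel's $p$-conjecture (Conjecture \ref{cspa}) is precisely $\varepsilon(C'_w)=\langle B_w\rangle$, i.e.\ $C'_w=\eta(\langle B_w\rangle)$; I would assume it, derive Fiebig's identity, and then observe that the same chain of equalities read backwards yields the converse, so that the two statements are equivalent.

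First, and unconditionally, I would identify $G(B_w)$ with $\mathcal{B}(w)$. The functor $G:\widehat{\mathcal{B}}^{\circ}_p\to\hCZ$-mod of Section \ref{P0} is additive and, by Fiebig's results underlying its definition (\cite[Theorem 4.3]{Fi3}), fully faithful onto $\mathcal{P}^{\circ}$; hence it carries the indecomposable $B_w$ to an indecomposable object of $\mathcal{P}^{\circ}$, and it is compatible with the support filtrations on the two sides, so that $\mathrm{supp}\,G(B_w)=\mathrm{supp}\,B_w\subseteq\ \leq w$ and the top stalk $G(B_w)^w$ is $R(l(w))$ (here $R_w\cong R$ as left $R$-modules). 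By the uniqueness clause of Definition/Theorem \ref{f} this forces $G(B_w)\cong\mathcal{B}(w)$.

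Second, I would compute the stalks $\mathcal{B}(w)^x\cong G(B_w)^x$. The crucial point — and this is Fiebig's moment-graph dictionary rather than a formal fact — is that for a Soergel bimodule $M$ the stalk $G(M)^x$ is \emph{not} the naive bimodule restriction $M^x$ of Section \ref{ca} (which carries spurious torsion, supported where $\mathrm{Gr}(x)$ meets the higher graphs), but rather the $x$-th subquotient of the standard filtration of $M$; and Soergel's filtration theory identifies the graded rank of that subquotient with $\underline{\overline{\mathrm{rk}}}\,\mathrm{Hom}(M,R_x)$. Granting this, $\underline{\mathrm{rk}}\,\mathcal{B}(w)^x=\underline{\overline{\mathrm{rk}}}\,\mathrm{Hom}(B_w,R_x)$ for every $x\in\widehat{W}^{\circ}$, and feeding this into Soergel's formula (\ref{eta}) gives
\[\sum_{x}\underline{\mathrm{rk}}\,\mathcal{B}(w)^x\,T_x=\sum_{x}\underline{\overline{\mathrm{rk}}}\,\mathrm{Hom}(B_w,R_x)\,T_x=\eta(\langle B_w\rangle).\]
Under the assumption $\varepsilon(C'_w)=\langle B_w\rangle$ the right-hand side is $C'_w$, which is Fiebig's identity for $w$. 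Conversely, if $C'_w=\sum_x\underline{\mathrm{rk}}\,\mathcal{B}(w)^x T_x$ holds for all $w\in\widehat{W}^{\circ}$, then $C'_w=\eta(\langle B_w\rangle)$, hence $\varepsilon(C'_w)=\langle B_w\rangle$, i.e.\ affine Soergel's $p$-conjecture holds.

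The main obstacle is the second step: making precise, with the correct degree normalisations, the correspondence between Fiebig's stalks $\mathcal{B}(w)^x$ and Soergel's spaces $\mathrm{Hom}(B_w,R_x)$ — in particular checking that the quotient $G(M)/G(M)_{\widehat{W}^{\circ}\setminus\{x\}}$ really extracts the standard subquotient of $M$ and not the coarser restriction — together with verifying that $G$ is fully faithful on all of $\widehat{\mathcal{B}}^{\circ}_p$. Once these compatibilities between Soergel's and Fiebig's conventions are in place, the rest is a formal chain of equalities resting on Proposition \ref{grande}(5), Definition/Theorem \ref{f} and Soergel's formula (\ref{eta}).
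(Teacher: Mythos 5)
The statement you are proving is stated in the paper as a conjecture: neither you nor the paper proves it unconditionally, and what Section \ref{eq1} actually establishes is its equivalence with affine Soergel's $p$-conjecture (Conjecture \ref{cspa}). Your proposal correctly adopts that conditional/equivalence shape, and your first step coincides with the paper's: transport $B_w$ (which exists by Proposition \ref{grande}) through the functor $G$, use that $G$ restricts to an equivalence onto $\mathcal{P}^{\circ}$ (both you and the paper take this from Fiebig, \cite{Fi3}), and invoke the uniqueness clause of Definition/Theorem \ref{f} to conclude $G(B_w)\cong\mathcal{B}(w)$.

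The gap is in your second step, and it is exactly where the content lies. You need $\underline{\mathrm{rk}}\,\mathcal{B}(w)^x=\underline{\overline{\mathrm{rk}}}\,\mathrm{Hom}(B_w,R_x)$, and you obtain it only by asserting ("granting this") that the moment-graph stalk $G(M)^x=G(M)/G(M)_{\widehat{W}^{\circ}\setminus\{x\}}$ is the $x$-th subquotient of the standard filtration of $M$ with graded rank $\underline{\overline{\mathrm{rk}}}\,\mathrm{Hom}(M,R_x)$; no argument is offered for this compatibility between Soergel's support filtrations and Fiebig's formalism, and as a statement about arbitrary $M$ it is not a formal fact. The paper sidesteps stalks entirely: it works with costalks, where the matching \emph{is} essentially formal --- $\mathrm{Hom}(R_x,M)\cong M_x$ (equation (\ref{ho})) together with Fiebig's characterization of $\mathcal{M}_x$ as the largest submodule on which $(z_y)$ acts by $z_x$ (\cite[Lemma 3.3]{Fi4}) gives $(B_w)_x\cong\mathcal{B}(w)_x$ --- and then rewrites \emph{both} conjectures in costalk form, $C'_w=\sum_x\underline{\overline{\mathrm{rk}}}(B_w)_xT_x$: Fiebig's conjecture via the stalk/costalk rank identity $\underline{\mathrm{rk}}\,\mathcal{B}(w)^x=\underline{\overline{\mathrm{rk}}}\,\mathcal{B}(w)_x$ of \cite[Corollary 3.10]{Fi4} (a property of the indecomposables $\mathcal{B}(w)$, reflecting their self-duality, not of arbitrary objects), and affine Soergel's $p$-conjecture via $\eta=d\circ h_{\nabla}$ together with the self-duality of $C'_w$ --- note that this uses $\mathrm{Hom}(R_x,B_w)$, not formula (\ref{eta}) with $\mathrm{Hom}(B_w,R_x)$ as in your chain, so even the degree bookkeeping in your version would additionally require the self-duality of $B_w$ to reconcile the two Hom-spaces. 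To repair your route you must either prove your claimed identification of $G(M)^x$ with the $\nabla$-type subquotient (plus that self-duality input), or simply import the two cited results of Fiebig and Soergel and argue through costalks as the paper does.
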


We note that because of \cite[Thm 2.11]{Fi4} (using the equivalence \cite[Prop. 3.4]{Fi4}) we have that Fiebig's conjecture implies Lusztig's conjecture for the dual group $G_{\overline{k}}^{\vee}.$

By \cite[Corollary 3.10 (1) (b)]{Fi4} we have that $\underline{\mathrm{rk}}\mathcal{B}(w)^x=\underline{\overline{\mathrm{rk}}}\mathcal{B}(w)_x.$
On the one hand this means that part (2) of Definition/Theorem \ref{f} is equivalent to 
\begin{itemize}
\item[(2')] $\mathrm{supp}\mathcal{B}(w)\subseteq \  \leq w \ \mathrm{and}\   \mathcal{B}(w)_w\cong R(-l(w))$
\end{itemize}
 and on the other hand  we can restate Fiebig's conjecture:
\begin{equation}\label{F}
C'_w=\sum_{x}\underline{\overline{\mathrm{rk}}}\mathcal{B}(w)_xT_x\ \  for  \  all\ w\in \widehat{W}^{\circ}
\end{equation}

\subsection{Equivalence}\label{equiv}

Let $\eta:\langle \mathcal{B}\rangle\rightarrow \mathcal{H} $ be the inverse of the isomorphism of rings  $\varepsilon: \mathcal{H}\rightarrow \langle \mathcal{B}\rangle $ defined in Section \ref{So}.
Soergel \cite[Prop 5.9]{So4} proves that  $\eta=d\circ h_{\nabla},$ (with $d$ as defined in Section \ref{HA}) and $h_{\nabla}$ defined by the following formula (see \cite[Korollar 5.16]{So4})
$$h_{\nabla}(B)=\sum_{x\in \widehat{W}}\underline{\overline{\mathrm{rk}}}\mathrm{Hom}(R_x,B)T_x.$$

An equivalence between additive categories sends indecomposable objets to indecomposable objects, so looking at Proposition \ref{grande} (1) and Definition/Theorem \ref{f} we can see that  the functor $G,$ that restricts \cite[Thm 6.3]{Fi2}  to an equivalence between $\widehat{\mathcal{B}}^{\circ}_p$ and $\mathcal{P}^{\circ},$ sends the bimodule $B_w$  to the $\mathcal{Z}^{\circ}-$module  $\mathcal{B}(w)$.

It is easy to verify (see Section \ref{ca}) that for all $M\in \mathcal{B}$ we have  \begin{equation}\label{ho}\mathrm{Hom}(R_x,M)\cong M_x\end{equation}
(the evaluation at $1_x\in R_x$ gives the isomorphism). Moreover $M_x$ is the biggest submodule of $M$ in which $r\in R\otimes R$ acts as multiplication by $r_x\in R_x.$ By \cite[Lemma 3.3]{Fi4} we have that for $\mathcal{M}\in \mathcal{Z}^{\circ}-$mod, $\mathcal{M}_x$ is the biggest submodule of $\mathcal{M}$ on which $(z_y)\in \mathcal{Z}^{\circ}$ acts as multiplication by $z_x,$ so we have an isomorphism
\begin{equation}\label{B}(B_w)_x\cong \mathcal{B}(w)_x\end{equation}
as graded right $R$-modules.

An equivalent formulation of Soergel's affine $p$-conjecture  is that for all $w\in \widehat{W}^{\circ},$ we have $\eta(\langle B_w\rangle)=C'_w.$
As $C'_w$ is self-dual, using equation (\ref{ho}) we see that Soergel's affine $p$-conjecture is equivalent to 
\begin{equation}\label{S}
C'_w=\sum_{x}\underline{\overline{\mathrm{rk}}}(B_w)_xT_x\ \  \mathrm{for}  \  \mathrm{all}\ w\in \widehat{W}^{\circ}
\end{equation}

So the isomorphism (\ref{B}) imply that Fiebig's conjecture (\ref{F}) is equivalent to Soergel's affine $p-$conjecture (\ref{S}).
 $\hfill \Box$

\newpage
\section*{Appendix}
Let $R$ be a root system, $B$ a basis of $R$, $A=(a_{st})_{s,t\in \mathcal{S}}$ its Cartan matrix  and $W$ its Weyl group.  Consider the free $\mathbb{Z}$-module $E_{\mathbb{Z}}=\oplus_{s\in \mathcal{S}} \mathbb{Z}\alpha_s^{\lor}$ with the action of the Weyl group defined linearly by the equation $$ t(\alpha_s^{\lor})= \alpha_s^{\lor}- a_{st} \alpha_t^{\lor} \hspace{.5cm}\mathrm{for\ all\ }s,t\in \mathcal{S}.$$ 

We recall that the \emph{Cartan matrix representation} over the field $k$  is the representation $E_k=E_{\mathbb{Z}}\otimes_{\mathbb{Z}}k$. In this Appendix we prove

\begin{prop}\label{anex}
The Cartan matrix representation $E_k$ of a Weyl group is reflection faithful if $k$ is a field of characteristic $p\neq 2, 3.$ 
\end{prop}

\begin{proof}

It is easy to see that if the group $W$ acts in the space $L$ and if  $x,y\in W$ then $yL^xy^{-1}\cong L^{yxy^{-1}},$ so we just need to study representatives in conjugacy classes.  
To prove that a representation $E$ is reflection faithful, we need to prove the three following points.

\begin{itemize}
\item[(i)] If $w$ is a reflection, then $\mathrm{codim}_EE^w=1$
\item[(ii)]  If $w$ is not a reflection, then $\mathrm{codim}_EE^w\neq1$
\item[(iii)]  If $w\neq 1$, then $\mathrm{codim}_EE^w\neq 0$ (it is a faithful representation)
\end{itemize}

We start by studying an important case, type A.  The symmetric group $S_n$ acts in $V=k^n$ by permuting the components. The Cartan matrix representation of $S_n$ is equivalent to the following subrepresentation of $k^n$ $$E=\{(a_1,\ldots, a_n)\in k^n\, \vert \, \sum_{i=1}^na_i=0\}.$$

\begin{lem}\label{achi}The representation $E$ is reflection faitfhul if $p\neq 2, 3$.
\end{lem}
\begin{proof}
Any element in the symmetric group $S_n$ is conjugate to an element $w$ of the form 
$$w=\Big(123\cdots \lambda_1\Big)\Big((\lambda_1+1)(\lambda_2+1)\cdots  (\lambda_1+\lambda_2)\Big)\cdots \left((\sum_{i=1}^{m-1}\lambda_i+1)\cdots (\sum_{i=1}^{m}\lambda_i)\right) $$

with $\lambda_1\geq \lambda_2\geq \cdots \lambda_m.$ It is easy to see that
\begin{equation}\label{q}V^w=\{(\underbrace{a, a,\ldots, a}_{\lambda_1}, \underbrace{b, b,\ldots, b}_{\lambda_2}, \ldots, \underbrace{c, c,\ldots, c}_{\lambda_m})\, \vert \, a, b,\ldots, c\in k\}\end{equation}
For this element $w$ let us define the number $h(w):=n-m$. By equation (\ref{q}) we have that $\mathrm{codim}_VV^w=h(w)$

\begin{equation}\mathrm{codim}_EE^w=  \begin{cases} h(w)\hspace{.68cm}
 \ \text{ if } V^w\nsubseteq E \\
 h(w)-1 \ \text{ if }V^w \subseteq E
 \end{cases}\end{equation}

It is clear that $V^w \subseteq E$ if and only if $p\, \vert\, \lambda_i$ for all $1\leq i \leq m.$
 \begin{itemize}
\item[(i)] In the symmetric group, reflections are transpositions. Let $w$ be a transposition. Then $\lambda_1=2$ and $\lambda_i=1$ for $2\leq i\leq m,$ so $h(w)=1.$ We just need to prove that if $p\neq 2$ then $V^w\nsubseteq E $ in order to prove point (i). If $n\geq 3$ this is clear because there is some $\lambda_i=1.$ If $n=2$, it is also clear because $p\neq 2$. 
\item[(ii)]  To prove point (ii) let us consider an element $w$ that is not a transposition,   $w\neq 1$ and such that $\mathrm{codim}_EE^w=1$. We will get a contradiction if $p\neq 2$. It is clear that $h(w)\geq 2$, so $h(w)= 2$ and $V^w \subseteq E.$  The equation $h(w)= 2$ gives us two options: \begin{enumerate}
\item $\lambda_1=\lambda_2=2$ and $\lambda_i=1$ for $3\leq i\leq m$.
\item   $\lambda_1=3$ and $\lambda_i=1$ for $2\leq i\leq m.$
\end{enumerate}
As $V^w \subseteq E,$ option (1) forces $n$ to be $4$ and $p$ to be $2$ and 
option (2) forces $n$ to be $3$ and $p$ to be $3$. 

\item[(iii)] Let $w\neq 1,$ we will suppose that $\mathrm{codim}_EE^w=0$ and we will get a contradiction  if $p\neq 2$.  We have that $w\neq 1$ implies $\mathrm{codim}_VV^w = h(w) \geq 1,$ so $h(w)=1$ and $V^w \subseteq E.$ But $h(w)=1$ if and only if $w$ is a transposition and we have already seen this in case (i). 
\end{itemize}

\end{proof}

\begin{remark} 
It is clear looking at the proof of Lemma \ref{achi} that if $n\geq 5,$ then $E$ is reflection faithful independently of $p$. 
\end{remark}

Now we go to the case of a Weyl group $W$ not in type A. We call $E_k$ the corresponding Cartan matrix representation.  Recall that for all $s \in \mathcal{S},$  the fundamental weight $\varpi_{s}$ belongs to $E_{\mathbb{Q}},$ and 
there is an integer $\lambda_{s}\in \mathbb{Z}$ such that $\lambda_{s}\varpi_{s}\in E_{\mathbb{Z}}.$ This integer, in type other than A, is $2$ or $3$.

The proof of (i) comes from the following formula in $E_{\mathbb{Q}}$, valid for all $s, t \in \mathcal{S}$

$$s_{\beta}\varpi_{\alpha}=\varpi_{\alpha}-\delta_{\alpha\beta}\beta$$

$$t(\varpi_{s})=\varpi_{_s}-\delta_{st}\alpha_t^{\lor}$$

where $\delta_{\alpha\beta}$ is the Kronecker symbol. This implies (i), unless that the reduction modulo $p$ of the Cartan Matrix is not invertible (this would mean that the images of the fundamental weights are not anymore a $k-$basis of $E_k$), and this does not happen if $p\neq 2$, so this completes the proof of (i). 

To prove (ii) and (iii) we need the following lemma. 
\begin{lem} Let $T_{\mathbb{Z}}:E_{\mathbb{Z}}\rightarrow E_{\mathbb{Z}}$ be a linear map.  If $f\in \mathbb{Z}[t]$ divides the characteristic polynomial of $T_{\mathbb{R}}:E_{\mathbb{R}}\rightarrow E_{\mathbb{R}}$, $\mathrm{deg}f\geq 2$  and $f(1)=p_0^m$ for $p_0$ a prime and $m$ a natural number, then $T_k:E_k\rightarrow E_k$ does not fix a hyperplane if $p$ (the characteristic of $k$) is different from $p_0$. 
\end{lem}
\begin{proof}
It is easy to see, reducing to the case where $f$ is a monomial, that $f\in \mathbb{Z}[t]$ implies that $f(t)=(t-1)h(t)+f(1)$ with $h(t)\in \mathbb{Z}[t]$. This means that in our case $f(t)=(t-1)h(t)+p_0^m.$ So the reduction $\bar{f}$ of $f$ modulo $p$ does not have $1$ as a root, since $p\neq p_0$ by hypothesis. 

The characteristic polynomial of $T_k$ is the reduction modulo $p$ of the characteristic polynomial of $T_{\mathbb{R}}$. We also have that if  $T_k$  fixes a hyperplane then $(t-1)^{n-1}$ divides the characteristic polynomial of $T_k$, with $n$ being the dimension of $E_k.$ But the reduction  $\bar{f}$ of $f$ modulo $p$ divides the characteristic polynomial of $T_k$ and does not have $1$ as a root. By hypothesis  $\mathrm{deg}f\geq 2$, so this completes the proof of the lemma. 
\end{proof}

In \cite{Ca} it is proved that any element $w$ of a Weyl group decomposes $E_{\mathbb{R}}$ as a direct sum of invariant subspaces. The restriction of the action of $w$ to these subspaces has prescribed characteristic polynomials as in \cite[Table 3]{Ca}. It is easy to see by inspection that all of the characteristic polynomials therein (other than in type A) are divisible by a polynomial $f$ such that $f(1)=1, 2, 3$ or $4$. It is almost always the first factor of the factorization given there, only for the admissible diagram of type $D_2$ you need to take the first and second factor. So we are reduced to a direct sum of type A blocks, so Lemma \ref{achi}  finishes the proof of  Proposition \ref{anex}.\end{proof}

\end{document}